\newtheorem{theorem}{Theorem}[section]
\newtheorem{defn0}[theorem]{Definition}
\newtheorem{prop0}[theorem]{Proposition}
\newtheorem{thm0}[theorem]{Theorem}
\newtheorem{lemma0}[theorem]{Lemma}
\theoremstyle{remark}
\newtheorem{remark}[theorem]{Remark}
\newtheorem{example}[theorem]{Example}
\theoremstyle{definition}
\newtheorem{algor0}{Algorithm}
\def\rig#1{\smash{ \mathop{\longrightarrow}
    \limits^{#1}}}
\def\dow#1{\Big\downarrow
   \rlap{$\vcenter{\hbox{$\scriptstyle#1$}}$}}
\def\O{{\mathcal O}}
\def\I{{\mathcal I}}
\def\C{{\mathbb C}}
\def\Z{{\mathbb Z}}
\def\P#1{{\mathbb P}^#1}
\newcommand{\propref}[1]{Prop.~\ref{#1}}
\newcommand{\thmref}[1]{Thm.~\ref{#1}}
\newcommand{\lemref}[1]{Lemma~\ref{#1}}
\newcommand{\exref}[1]{Example~\ref{#1}}
\newcommand{\secref}[1]{Section~\ref{#1}}
\newcommand{\rmkref}[1]{Rmk.~\ref{#1}}
\newcommand{\im}{\operatorname{im}}
\newcommand{\Hom}{\operatorname{Hom}}
\newcommand{\SL}{\operatorname{SL}}
\newcommand{\rank}{\operatorname{rank}}
\newcommand{\baseloc}{\operatorname{baseloc}}
\newcommand{\codim}{\operatorname{codim}}
\newcommand{\PP}{\mathbb{P}}
\newcommand{\CC}{\mathbb{C}}
\def\bw#1{{\textstyle\bigwedge^{\hspace{-.2em}#1}}}
\date{\today}
\begin{document}

\title{Eigenvectors of Tensors and Algorithms for Waring decomposition}
\thanks{The first author is supported by National Science Foundation grant Award No. 0853000: International Research Fellowship Program (IRFP). The second author is member of GNSAGA-INDAM.}

\author{Luke Oeding, Giorgio Ottaviani}\email{oeding@math.berkeley.edu, ottavian@math.unifi.it}
\address{Dipartimento di Matematica ``U. Dini'' \\
Universit\`a degli Studi di Firenze \\
Firenze, Italy}

\begin{abstract}
A Waring decomposition of a (homogeneous) polynomial $f$ is a minimal sum of powers of linear forms expressing $f$. Under certain conditions, such a decomposition is unique.
We discuss some algorithms to compute the Waring decomposition, which are linked to the equations of certain secant varieties and to eigenvectors of tensors.
In particular we explicitly decompose a cubic polynomial in three variables as the sum of five cubes (Sylvester Pentahedral Theorem).
\end {abstract}

\maketitle
\section{Introduction}
In this article we shall be concerned with the following general problem: given a polynomial, what is its minimal decomposition as a sum of powers of linear forms?

Let $V$ be a complex vector space of dimension $n+1$ and let $S^{d}V$ denote the space of $d^{th}$-order symmetric tensors.  A choice of basis $\{x_{0},\dots,x_{n}\}$  for $V$ induces a natural choice of basis for $S^{d}V$ and allows one to express $f\in S^{d}V$ as a polynomial in the variables $x_{i}$.
Because of {our} interest in tensor decomposition, we often do not make a distinction between a symmetric tensor and a polynomial, and often use the terms interchangeably. 

Let $f\in S^dV$. In this paper, our focus is on the symmetric tensor decomposition
\begin{equation}\label{eq:waring f}
f=\sum_{i=1}^rc_{i}(v_i)^d,
\end{equation}
where $v_i \in V$ have degree $1$ and $c_{i}\in \CC$. For historical reasons, we will refer to this as a {\it Waring decomposition} or simply a {\it decomposition} of $f$. 
 The minimum number of summands occurring in such a decomposition is called the {\it (symmetric) rank} of $f$.  
 \begin{remark}Note that when working over $\CC$, the constants $c_{i}$ in \eqref{eq:waring f} may be assumed to be equal to $1$, however this ambiguity will become important in our algorithms as initially we will only find the classes of the linear forms $[v_{i}] \in \PP V$ (the projective space of lines in $V$) and we will have to solve an easy linear system to find appropriate $c_{i}$ to resolve this indeterminacy in order to complete our decomposition algorithms.
\end{remark}

Because of the vast amount of work done in this area by several authors in diverse areas of science, the Waring decomposition is also known by many other names such as canonical decomposition (CANDECOMP/PARAFAC (CP)), rank-1 decomposition, sum of powers decomposition, and so on.

We start with an example that illustrates the main results of this paper, which are Algorithm~\ref{alg:metaalgorithm} and \thmref{thm:metathm}.  
A classical result, attributed to Hilbert, Richmond and Palatini, states that a general \footnote{see Remark~\ref{rmk:general} for a discussion of the term general} form $ f \in S^5\CC^3$ can be decomposed in unique way as the sum of seven fifth powers. For reference, see \cite{RanestadSchreyer} or \cite{Comas_Seiguer} and the references therein.

Set for the moment $V=\C^3$.  In the formula \eqref{eq:waring f} with $r=7, d=5$,
we want to find an efficient algorithm to find the $v_i\in\C^3$ and $c_i\in\C$ which give the decomposition of $ f $. To $ f $ we associate a linear map
\[P_{ f }\colon \Hom(S^2 V, V)\to \Hom(V, S^2V),\]
where $\Hom(S^2 V, V)$ is the space of linear maps from $S^2V$ to $V$.
Note that the target space is the dual of the source one.
If $ f =v^5$ then the definition of $P_{v^5}$ is the following
\begin{equation}
P_{v^5}(M)(w)=\left(M(v^2)\wedge v\wedge w\right)(v^2),
\end{equation}
where $M\in \Hom(S^2 V, V)$ and $w\in V$.
In general, if $ f $ is any element of $S^{5}V$, the definition of $P_{ f }$ is extended by linearity (which can always be done because $S^{d}V$ has a basis consisting of powers of linear forms).

To see the map $P_{ f }$ explicitly as a matrix, consider the monomial basis $\{x_ix_j\}_{0\le i\le j\le 2}$ of $S^2V$.
It turns out that with an appropriate ordering of the basis, $P_{ f }$ is represented by the following $18\times 18$  block matrix, where the nine depicted blocks have size $6\times 6$
\begin{equation}\label{eq:Pf}
\left[\begin{array}{rrr}
0& C_{f_2}&- C_{f_1}\\
- C_{f_2}&0& C_{f_0}\\
 C_{f_1}&- C_{f_{0}}&0\end{array}\right],
 \end{equation}
and $ C_{f_i}$ is the matrix whose entries labeled by $((j,k), (p,q))$ (for ${0\le j\le k\le 2}$, ${0\le p\le q\le 2}$) are
given by the fifth derivative $ f_{ikjpq}$.

{If $ f $ has rank $1$, we may choose coordinates such that
$f=x_0^5$. In this case $\rank C_{f_0}=\delta_{0i}$ and the matrix $P_{ x_{0}^{5} }$ has rank $2$. By the linearity of $P_{f}$ in the argument $f$ and the sub-additivity of matrix rank, if $ f $ has rank $r$, $P_{ f }$ must have rank $\leq 2r$.}  
In fact, we will see that in addition to giving this bound on rank, $P_{ f }$ will also be the key to an efficient way to decompose $ f $.

Now we have the following result (see Lemma~\ref{lem3.1}):
$P_{v^5}(M)=0$ if and only if there exists $\lambda$ such that $M(v^2)=\lambda v$.
Such $v$ is called an {\it eigenvector of the tensor} $M$ (or simply {\it eigenvector} of $M$, when the context is clear), 
where the homomorphism  $M\colon S^2V\to V$ is viewed as a tensor in $S^{2}V^{*}\otimes V$.

Eigenvectors of tensors were introduced and studied in \cite{Lim05,Qi05}. Recently, Cartwright and Sturmfels \cite{CartwrightSturmfels2011} have found a formula computing the number of eigenvectors of {a tensor $M \in \Hom(S^{m}\CC^{n},\CC^{n})$.}  (In  ~\secref{sec:Chern} we review their formula and propose a geometric interpretation of this formula that leads to an alternative proof which generalizes to other cases.)  The Cartwright-Sturmfels formula says that there are exactly {\it seven} eigenvectors $\{v_1,\ldots , v_7\}$ {of a general tensor in $\Hom(S^{2}\CC^{3},\CC^{3})$}. Later we will see that general elements of the kernel $\ker P_{ f }$ share the same eigenvectors.
{\it These seven eigenvectors appear exactly in the decomposition (\ref{eq:waring f}).} This is the basic novelty of this paper with respect
to \cite{LO11}, where the same example was considered. Our methods also provide a solution to the pentahedral example (see ~\secref{sec:pentahedral}), which was left open in
\cite{LO11}.

In our case, choose a basis $\{y_0, y_1, y_2\}$ of $V$ and its dual basis 
$\{x_0, x_1, x_2\}$ of $V^{*}$. 
Any $M\in S^{2}V^{*}\otimes V$ can be written as $\sum_{i=0}^2y_iq_i(x)$ where $q_i$ is a quadratic polynomial. {Recall that $v$ is an eigenvector of $M\in S^{2}V^{*}\otimes V$ if $M(v^{2})\wedge v =0$}, so the coordinates of the seven eigenvectors can be found
by the vanishing of the $2\times 2$ minors of the matrix
\begin{equation}\label{eq:2by2}
\left[\begin{array}{ccc}x_0&x_1&x_2\\
q_0&q_1&q_2\end{array}\right].
\end{equation}
Now we summarize the steps to compute the decomposition of general plane quintics, $ f \in S^5\CC^{3}$:

\begin{algor0}
\hfill\par
\textbf{Input:} $ f  \in S^{5}\CC^{3}$.
\begin{enumerate}
\item Construct the matrix $P_{ f }\colon \Hom(S^2 \CC^{3}, \CC^{3})\to \Hom(\CC^{3}, S^2\CC^{3})$ as in \eqref{eq:Pf}.
\item Compute $\ker P_{f}$. Choose a general $M\in \ker P_{f}$ and write $M =\sum_{i=0}^2y_iq_i(x)$ as above.
\item Find eigenvectors $\{v_1,\ldots , v_7\} \in \CC^{3}$ of $M$ via the zero-set of the $2\times 2$ minors of \eqref{eq:2by2}. 
\item Solve the linear system $ f =\sum_{i=1}^7c_iv_i^5$ in the unknowns $c_i\in \CC$.
\end{enumerate}
\par\textbf{Output:} The unique Waring decomposition of $f$.
\end{algor0}

In this article, our approach to Waring decomposition uses algebraic geometry, starting with the classical Sylvester algorithm and the notion of eigenvectors of tensors.  With the aid of recent progress \cite{LO11}, on equations of secant varieties using vector bundle techniques, we are able to go further. In fact, this paper can be seen as a constructive version for the symmetric case, of the techniques developed in  \cite{LO11}. 
The main result of this paper is a new algorithm for efficient Waring decomposition of symmetric tensors, stated in general in Algorithm~\ref{alg:metaalgorithm}.  This algorithm is a consequence of the geometric facts contained in\propref{prop:baseloc} and \thmref{thm:metathm}.  

Of course our algorithm will not always succeed to produce the Waring decomposition of symmetric tensors unless the rank is sufficiently small and the tensor is general. On the other hand, we can give precise bounds for the maximum rank of a symmetric tensor which can be decomposed via our algorithm.
In \thmref{thm:IKbound}, we give an improvement to the Iarrobino-Kanev bound on the applicability of the catalecticant method for Waring decomposition.
Going further, we state in Theorems~\ref{thm:k2bound} and~\ref{thm:knbound} sufficient conditions for the success of our algorithm in the cases of symmetric tensors on $3$ or more variables, respectively. We give a further discussion of what happens in the case that our algorithm fails in Remarks~\ref{rmk:failure1},~\ref{rmk:failure2}. In particular there is at least one case where our algorithm fails to decompose a tensor, but, as a side effect, brings to light a new way to {express} a rational quartic curve through 7 {(given)} general points, see Remark~\ref{rmk:rationalquartic}.

In addition to finding a new algorithm for Waring decomposition, and bounds for its success, we find a new proof of a result of Cartwright-Sturmfels (Proposition~\ref{prop:CS}) on the number of generalized eigenvectors that uses Chern classes and generalizes to other types of generalized eigenvectors (see\propref{prop:wedge_eigen}), that we call simply eigenvectors of tensors.


The use of tensors is widespread throughout science and appears in areas such as  Algebraic Statistics, Chemistry, Computer Science, Electrical Engineering, Neuroscience, Physics and Psychometrics.  A common theme at the 2010 conference on Tensor Decomposition and Applications in Monopoli, Italy was the need for efficient and reliable algorithms to perform tensor decomposition.
Indeed, we are certainly not the first to consider this problem. For a sample of related recent progress on tensor decomposition, see \cite{  BCMT, Comon-Mourrain, BallicoBernardi2010, BernardiGI, BucBuc}. Our aim is to use algebraic geometry as a basis for algorithms that can be used (either in place of or in combination with the previous algorithms) to improve efficiency and robustness.  We make some comparison with our methods and those of \cite{BCMT} in Remark~\ref{rmk:BCMT}.

Kolda and Mayo \cite{KoldaMayo} have recently studied an efficient way of computing eigenvectors of tensors, analogous to the usual iterative procedure to compute
usual eigenvectors of matrices.  Further work has been done by Ballard, Kolda and Plantenga \cite{BKP} to implement this method and they have achieved significant speed-ups utilizing a GPU when eigenvectors of many small tensors are to be computed.  Since our tensor decomposition methods use eigenvectors of tensors, this indicates that these methods could be combined with our algorithms to improve efficiency.

Another aspect of using tensor eigenvectors in our algorithms is that it may be reasonable to try (in the tensor setting) to mimic a method to approximate a matrix by one of lower rank via eliminating the eigenspaces corresponding to small eigenvalues. We hope that this article can serve as a starting point for such a study. For more issues regarding low-rank approximation of tensors, and the well-posedness of this problem, see \cite{deSilva_Lim}.

We have structured this article for two diverse audiences; algebraic geometers and researchers from a variety of applied fields studying tensors.  With algebraic geometers in mind, our goal is to show how well-known techniques in algebraic geometry can be used to solve problems in applied areas.
Most of our constructions are better explained with the geometric language of {\it vector bundles}.
However, we did our best to first state the main results by using explicit matrices, without the language of vector bundles. For this reason, we defer the main proofs until ~\secref{sec:bundle method}, in the sense that almost all our results are particular cases of a general result (\thmref{thm:metathm}), which is stated with the language of vector bundles.
Keeping in mind researchers studying applied tensor problems, in ~\secref{sec:classical} we describe the history and state of the art of algebraic geometry concerning tensor decomposition from a practical point of view. Because in their original versions the statements may not have been so accessible for applications, we have restated results from algebraic geometry, hopefully in a more transparent language, concerning generic rank (see \thmref{thm:AH}) and uniqueness of tensor decomposition (see \thmref{thm:uniqueness}).  In ~\secref{sec:Koszul}, we go on to illustrate our new techniques that use Koszul matrices to compute Waring decomposition. As mentioned above, in ~\secref{sec:bundle method} we give the generalization of our techniques. Finally, in ~\secref{sec:M2} we describe our Macaulay2 implementation of our algorithms, and we hope that this will serve as a starting point for further implementations of these methods.

\section{Classical methods for tensor decomposition: Sylvester's catalecticant method}\label{sec:classical}
\subsection{General results on the symmetric rank}\label{sec:rank}
\begin{remark}\label{rmk:general}
We use the term ``general element'' in the sense of algebraic geometry to indicate that the element is chosen to avoid a (Zariski) closed set.  
So when we say that the general element in a variety $X$
has a property, this means that $X$ contains a dense subset $X^0$
such that every element in $X^0$ satisfies that property.  We call a property ``generic'' if it holds for general elements.  
We consider the ``general'' assumption a mild assumption because in practice, most tensors we encounter in nature will actually be general. It is no loss to replace ``generic'' with ``almost always'' and ``general'' with ``randomly chosen,'' or ``up to certain non-degeneracy conditions.''
\end{remark}

{Regarding ranks of tensors, the following capstone theorem answers the question completely for general symmetric tensors.}
\begin{theorem}[\cite{AH2}]\label{thm:AH} 
Let $V$ be a complex vector space of dimension $n+1$.
The general $f\in S^dV$ has rank 
$$\left\lceil
\frac{{{n+d}\choose d}}{n+1}\right\rceil,$$ 
which is called the
{\it generic rank}, with the only exceptions
\begin{itemize}
\item{}$d=2$, where the generic rank is $n+1$.
\item{}$2\le n\le 4$, $d=4$, where the generic rank is 
${{n+2}\choose 2}$.
\item{}$(n,d)=(4,3)$, where the generic rank is $8$.
\end{itemize}

\end{theorem}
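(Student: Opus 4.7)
The plan is to translate the statement about symmetric tensor rank into a dimension statement about secant varieties of Veronese embeddings and then invoke the chain of classical reductions that underlie the Alexander-Hirschowitz argument. The generic rank of elements of $S^d V$ equals the least $r$ for which the $r$-th secant variety $\sigma_r(\nu_d(\PP V))$ of the Veronese variety fills $\PP(S^d V)$. A naive parameter count (each summand $c_i v_i^d$ depends on $n+1$ projective parameters, and the ambient space has projective dimension $\binom{n+d}{d}-1$) produces exactly the expected value $\lceil \binom{n+d}{d}/(n+1)\rceil$, so the theorem is equivalent to the statement that $\sigma_r(\nu_d(\PP V))$ has the expected dimension outside the listed $(n,d)$.

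The first substantive step is to invoke Terracini's Lemma: at a generic smooth point $[v_1^d+\dots+v_r^d]\in \sigma_r(\nu_d(\PP V))$ the tangent space equals the span $\sum_i v_i^{d-1}\cdot V \subset S^d V$. The problem then becomes: when does this sum have the expected codimension $\max\{0,\binom{n+d}{d}-r(n+1)\}$? The second step is apolarity (identifying $S^dV$ with the degree-$d$ piece of a polynomial ring and pairing with $S^dV^*$): the orthogonal complement of $\sum_i v_i^{d-1}\cdot V$ is exactly the space of forms of degree $d$ on $\PP V^*$ that vanish to order $2$ at the points $[v_i]$. So the question reduces to a Hilbert function computation for a scheme of $r$ generic double points ($2$-fat points) on $\PP^n$: is the restriction map
\[
H^0(\PP^n,\Osh(d)) \longrightarrow \bigoplus_{i=1}^r \Osh_{2p_i}
\]
of maximal rank for general $p_1,\dots,p_r$?

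The core of the proof — and the step I expect to be the hard part — is then to show that this map has maximal rank for all $(n,d,r)$ outside the listed exceptions. The standard approach is the \emph{Horace differential method} of Alexander and Hirschowitz: run a double induction on $n$ and $d$, specialize a carefully chosen subset of the $2$-fat points to lie on a hyperplane $H\cong \PP^{n-1}$, and split the cohomology count via the restriction/residual exact sequence with respect to $H$. A naive restriction wastes conditions when a $2$-fat point meets $H$, so one must use Hirschowitz's refinement where the trace on $H$ and the residue in $\PP^n$ carry non-reduced infinitesimal structure ("vertical" and "horizontal" halves of each double point). The inductive step then reduces $(n,d,r)$ to strictly smaller cases, modulo a delicate arithmetic of how many fat points to specialize; the Horace book-keeping must be set up so that the induction closes precisely outside the four exceptional families.

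Finally, one checks separately that the listed exceptions really are exceptions. The case $d=2$ is classical: $\sigma_r(\nu_2(\PP^n))$ is the variety of symmetric $(n+1)\times(n+1)$ matrices of rank $\le r$, so generic rank is $n+1$ rather than $\lceil\binom{n+2}{2}/(n+1)\rceil$. The cases $(n,d)=(2,4),(3,4),(4,4),(4,3)$ are defective because there exists an unexpected hypersurface of degree $d$ double at the requisite number of general points (e.g.\ the unique conic through $5$ points in $\PP^2$, squared, gives the $n=2,d=4$ defect), a direct apolarity or Macaulay-style calculation. The remainder of the proof is the verification that the inductive scheme of the previous paragraph, correctly initialized and with the exceptional $(n,d)$ excluded as base cases, indeed propagates the expected Hilbert function to all $(n,d,r)$ not in the list; this bookkeeping is the technical heart of \cite{AH2} and the part that no short argument replaces.
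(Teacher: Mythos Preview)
The paper does not prove this theorem at all: it is stated with the attribution \cite{AH2} and used as a black box, so there is no ``paper's own proof'' to compare against. Your outline (Terracini's Lemma reducing to tangent spans, apolarity reducing to Hilbert functions of double-point schemes, then the Horace/differential-Horace induction of Alexander and Hirschowitz, with the four defective families handled separately) is an accurate high-level sketch of the argument in the cited source, and you are right that the delicate inductive bookkeeping is the irreducible technical core that cannot be compressed into a short argument.
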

{Note that we use the functions $\lceil\cdot\rceil$ and  $\lfloor\cdot\rfloor$  respectively to indicate round-up and round-down.} 
The elements of rank one in $S^dV$ are just the polynomials that are $d$-th powers of a linear polynomial.
They form an irreducible algebraic variety, which is the cone
over a projective variety which is called the $d$-Veronese variety of $\PP^n$
and we denote $v_d(\PP^n)$.
 For all values of $k$ less than the generic rank, the (Zariski) closure of the elements of rank $\leq k$ is a irreducible variety,
which is the cone over $\sigma_k(v_d(\PP^n))$, the latter is called the $k$-th secant variety of $v_d(\PP^n)$. A consequence of the Alexander-Hirschowitz theorem
is that if 
$k\le \left\lfloor \frac{{{n+d}\choose d}}{n+1}\right\rfloor$, 
then $\dim\sigma_k(v_d(\PP^n))=k(n+1)-1$ {(the expected dimension)}
with the only exceptions 
\begin{itemize}
\item $d=2$, $2\le k\le n$
\item $2\le n\le 4$, $d=4$, $k={{n+2}\choose 2}-1$
\item $(n,d)=(4,3)$, $k=7$.
\end{itemize}
The cases listed above are called the {\it defective cases}.

{After the rank of a tensor is known, the next natural question is whether there is a unique decomposition (ignoring trivialities). The following represents the state of the art regarding this question. }
\begin{theorem}[\cite{CC02}, \cite{Mella06}, \cite{Ballico2005}]\label{thm:uniqueness}
For all values of $r$ smaller than the generic rank,
the general element of rank $r$ in $S^dV$ has a {\it unique (up to scaling)} decomposition $f=\sum_{i=1}^r c_{i}(v_i)^d$
with the only exceptions
\begin{enumerate}
\item the defective cases, where there are infinitely many decompositions
\item rank $9$ in $S^6\C^3$, where there are exactly two decompositions
\item rank $8$ in $S^4\C^4$, where there are exactly two decompositions.
\end{enumerate}
\end{theorem}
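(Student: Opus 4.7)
The plan is to rephrase uniqueness as a geometric identifiability statement about the secant variety, and then apply the framework of weakly defective varieties developed by Chiantini--Ciliberto. Consider the incidence
\[
\widetilde{\sigma}_r \;=\; \overline{\{([v_1],\dots,[v_r],[f]) : [f] \in \langle [v_1^d],\dots,[v_r^d]\rangle\}} \;\subset\; (v_d(\PP^n))^r \times \sigma_r(v_d(\PP^n)),
\]
together with the two projections. Uniqueness (up to scaling and reordering) of the Waring decomposition of a general $f$ of rank $r$ is exactly the statement that the projection $\pi\colon \widetilde{\sigma}_r \to \sigma_r(v_d(\PP^n))$ has degree $1$. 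Thus the whole theorem becomes a statement about the generic fibre of $\pi$: it should be a single unordered point except in the listed exceptional cases.

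First I would dispose of case (1). In the defective cases the dimension of $\sigma_r$ is strictly less than the expected $r(n+1)-1 = \dim \widetilde{\sigma}_r$, so the general fibre of $\pi$ is positive-dimensional by straight dimension count, giving infinitely many decompositions. For non-defective $r$ strictly below the generic rank, $\pi$ is generically finite, and the problem is to show the fibre is reduced of length one (or exactly two in cases (2)--(3)). Here I would invoke the Chiantini--Ciliberto notion of $k$-weak defectivity: $X \subset \PP^N$ is $k$-weakly defective if the general hyperplane tangent to $X$ at $k$ general points is tangent along a positive-dimensional subvariety through those points. Their key theorem asserts that if $v_d(\PP^n)$ is not $r$-weakly defective then the general element of $\sigma_r(v_d(\PP^n))$ admits a unique decomposition into $r$ rank-one elements. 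By Terracini's Lemma the tangential contact locus is cut out by the linear system of degree-$d$ forms singular at the $r$ chosen points, so non-weak-defectivity translates into a Hilbert-function statement about fat-point schemes $2Z$ with $Z$ a general reduced set of $r$ points in $\PP^n$, which one can attack by a Horace-style specialisation paralleling Alexander--Hirschowitz.

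The bulk of the work, executed by Chiantini--Ciliberto and refined by Mella and Ballico, is to carry out this Hilbert-function analysis case by case and show non-weak-defectivity throughout the non-defective range, except for the two sporadic exceptions. I expect the main obstacle to lie precisely in these two sporadic cases $(n,d,r)=(2,6,9)$ and $(3,4,8)$. Here one has to both produce a second decomposition and prove there are no further ones. The two decompositions can be constructed by recognising that $r$ general points lie on a special curve (a plane sextic for $(2,6,9)$, a rational normal quartic or elliptic quartic for $(3,4,8)$) on which an auxiliary linear system distinguishes exactly two $0$-dimensional subschemes of length $r$ cutting out the same form. The sharpness (``exactly two'') is the delicate part: one must rule out a third decomposition by degenerating a hypothetical third $r$-tuple and obtaining a contradiction with the Hilbert function of the resulting zero-scheme, or equivalently with the dimension of the tangential contact locus predicted by weak defectivity. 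This is the step that requires genuinely new input beyond the generic machinery, and is where Mella's and Ballico's contributions enter.
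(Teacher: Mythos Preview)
The paper does not prove this theorem at all: it is stated purely as a citation of results due to Chiantini--Ciliberto, Mella, and Ballico, with no argument given in the text. So there is no ``paper's own proof'' to compare your proposal against.

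That said, your outline is an accurate high-level summary of the strategy in those cited works: recasting uniqueness as generic identifiability via the abstract secant map, reducing to non-$r$-weak-defectivity of $v_d(\PP^n)$ through Terracini's lemma and the tangential contact locus, and then handling this by Hilbert-function arguments on double-point schemes in the spirit of Alexander--Hirschowitz. You are also right that the defective cases fall out by a dimension count, and that the two sporadic exceptions require a separate ad hoc analysis. One small caution on the details of the sporadic cases: for $(n,d,r)=(2,6,9)$ the relevant geometry is not a plane sextic through the nine points but rather the pencil of cubics through them (equivalently, the unique elliptic curve on which the nine points lie), and the second decomposition arises from the residual intersection phenomenon on that cubic; your description there is a bit off. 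But as a roadmap to the literature your proposal is on target; it is simply not a self-contained proof, and the paper under review never claimed to supply one.
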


The cases listed as (2) and (3) in the above theorem are called the {\it weakly defective cases}.
For the generic rank, it is known that when $n+1$ does not divide ${{n+d}\choose d}$ then  there are infinitely many decompositions. On the other hand, when
$n+1$ divides ${{n+d}\choose d}$, apart from the defective cases, then there are finitely many decompositions.
In the latter case, it is expected that the decomposition of a general element in $S^dV$ is rarely unique.
{In this situation,} the only cases where uniqueness is known to hold are {the following:}
\begin{itemize}
\item{}$S^{d}\C^2$, for odd $d$, which was addressed by Sylvester in 1851.
\item{}$S^5\C^3$, which is the case addressed in the introduction.
\item{}$S^3\C^4$, the uniqueness result for the general rank, which is $5$, is known as the {\it Sylvester Pentahedral Theorem}.
\end{itemize} 
It is expected that these three are the only cases 
where uniqueness of decompositions hold for the general element.
Partial results confirming this expectation are proved in \cite{Mella09}.

A consequence of our construction is that we can treat {all three of these cases} in a unified manner.  The fact that our construction only finds these three cases gives further evidence that these may be the only exceptional cases for uniqueness.

Despite this beautiful theoretical picture, it is hard to compute  the rank of a given symmetric tensor and to find explicitly its tensor decomposition.
The brute force attempt to solve \eqref{eq:waring f} in the unknowns defining each $v_i$ with a computer algebra system is time and memory consuming and often fails, even in small dimension.  In fact, it is known that most tensor problems are extremely hard and often unsolvable \cite{Hillar09mosttensor}.  So while our goal is to make improvements in efficiency and reliability of tensor decomposition algorithms, we know that the generic problem for large tensors will remain difficult.

\subsection{The catalecticant method}\label{sec:cat}

The catalecticant method was developed in the XIX century, by Sylvester and others, to compute the rank of a symmetric tensor in $S^d\CC^{n+1}$ and to compute its Waring decomposition. The method {is completely successful} in the case of binary forms, \emph{i.e.} $n=1$,
but gains only partial success for $n\ge 2$. It is important to understand it deeply,
because most of successive methods proposed, including our method  developed in this paper, can be considered as a generalization of the catalecticant method.

Let $\{x_{i}\}_{1\leq i \leq n+1}$ be a basis of a vector space $V$. Given $f \in S^{d}V$ one can define maps for each $m< d$
\begin{equation}\label{eq:catdef}
\begin{array}{rrl}
C^{m}_{f}\colon& S^{m}V^{*} & \rig{} S^{d-m}V\\
         &x_{i_{1}}\cdots x_{i_{m}}  &\mapsto \frac{\partial^{m} f}{\partial x_{i_{1}}\cdots \partial x_{i_{m}}}
.
\end{array}
\end{equation}
Seen as a matrix, $C_{f}^{m}$ is known as a catalecticant or (semi-)Hankel matrix.  From (the more recent) point of view of tensors, $C_{f}^{m}$ can be thought of as a \emph{symmetric flattening} of a symmetric tensor, which is a symmetric version of a flattening of a tensor.  For this and other types of flattenings see \cite{LO11}.

Note that if $f=l^d$ then $\rank C^{m}_f=1$,
hence if $f$ has rank $r$ then $\rank C^{m}_f\le r$.  This gives lower bounds on the rank of $f$. But in fact, in this case, when $C_{f}^{m}$ is non-trivial we will be able to further exploit this construction to aid in decomposing $f$.

In their book \cite{IarrobinoKanev}, Iarrobino and Kanev described the key ingredients to the classical catalecticant method.  In particular they describe an algorithm to find a Waring decomposition.
We have implemented this approach, see ~\secref{sec:cat method}. Here is a summary of the algorithm.

\begin{algor0}[Catalecticant Algorithm]{\cite[5.4]{IarrobinoKanev}}\label{catalg}
\hfill\par
\textbf{Input:} $f \in S^{d}V$, where $\dim V = n+1$.
\begin{enumerate}
\item  Construct, via \eqref{eq:catdef}, the most square possible catalecticant $C^{m}_{f}= C_{f}$ with  $m = \left\lceil\frac{d}{2}\right\rceil$.
\item Compute $\ker C_{f}$. Note that $ \rank (f) \geq \rank (C_{f})$.
\item Find  the zero-set $Z'$ of the polynomials in $\ker C_{f}$. 
\begin{enumerate}
\item If $Z'$ is not given by finitely many reduced points, stop; this method fails. 
\item Else continue with $Z' =\{[v_1],\ldots , [v_s]\}$.
\end{enumerate}
\item Solve the linear system defined by $f=\sum_{i=1}^sc_iv_i^d$ in the unknowns $c_i$.
\end{enumerate}
\par\textbf{Output:} The unique Waring decomposition of $f$.
\end{algor0}

Algorithm~\ref{catalg} is a special case of Algorithm~\ref{alg:metaalgorithm} below, where we take $E  =\O(m)$ and $L=\O(d)$.
The following theorem gives a sufficient condition that guarantee its success. It can be seen as a slight improvement
(at least for $n\ge 3$) of the bound described in \cite{IarrobinoKanev}, see Theorems~4.10A and 4.10B.

\begin{theorem}\label{thm:IKbound}
Suppose $f=\sum_{i=1}^rv_i^{d}$ is a general form 
of rank $r$ in $S^{d}V$, let $z_i=[v_i]\in\PP(V)$ 
be the corresponding points and let $Z=\{z_1,\ldots, z_r\}$. 
Set $m = \left\lceil\frac{d}{2}\right\rceil$.
\begin{enumerate}
\item If $d$ is even and $r\le {{n+m}\choose n} -n-1$ or 
if $d$ is odd and $r \leq {{n+m-1}\choose n}$, 
then 
\begin{equation}
\ker C_f=I_{Z,m},
\end{equation}
 where $I_{Z,m} \subset S^{m}V^{*}$ denotes the subspace of polynomials of degree $m$ vanishing on $Z\subset V$. 
Moreover Algorithm~\ref{catalg} produces the unique Waring decomposition of $f$.
\item
Finally if $d$ is even, and $r = {{n+m}\choose n} -n$, then it is possible that $Z \subsetneq Z'$, where $Z'$ is obtained by Algorithm~\ref{catalg}.  But still when $n=2$, the algorithm will produce the unique minimal Waring decomposition. Further when $n\geq 3$, the algorithm will succeed after repeating step (4) finitely many times using subsets $Z''\subset Z'$ of size $\rank (C_{f})$. 
\end{enumerate}
\end{theorem}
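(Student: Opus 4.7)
The plan is to analyze the explicit action of $C_f$ on the Waring decomposition of $f$. A direct differentiation yields, for any $p \in S^m V^*$,
\[
C_f(p) \;=\; \frac{d!}{(d-m)!}\sum_{i=1}^r c_i\, p(v_i)\, v_i^{d-m},
\]
so the inclusion $I_{Z,m}\subseteq \ker C_f$ is automatic. The reverse inclusion reduces to showing that $v_1^{d-m},\ldots,v_r^{d-m}$ are linearly independent in $S^{d-m}V$, which for $r$ general points in $\PP^n$ holds whenever $r\le \binom{n+d-m}{n}$. With $d$ even we have $d-m=m$ and the hypothesis $r\le \binom{n+m}{n}-n-1$ is strictly stronger than required; with $d$ odd we have $d-m=m-1$ and the hypothesis $r\le \binom{n+m-1}{n}$ matches the needed bound exactly. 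Combined with $c_i\neq 0$, this forces $p(v_i)=0$ for every $i$ whenever $C_f(p)=0$, yielding $\ker C_f=I_{Z,m}$.

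To complete part (1), the scheme $Z'=V(I_{Z,m})$ must coincide with $Z$ as a reduced zero-dimensional scheme, so that step (3) of Algorithm~\ref{catalg} does not abort. The hypotheses are chosen so that $\dim I_{Z,m}$ is large enough (specifically $\ge n+1$ for $n\ge 2$, and $\ge 1$ for $n=1$) that, for $r$ general points in $\PP^n$, the base locus of $I_{Z,m}$ is exactly $Z$ with reduced structure. Consequently $Z'=Z$, and step (4) reduces to a linear system in the $c_i$ whose unique solution is guaranteed by \thmref{thm:uniqueness}.

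For part (2), the hypothesis gives $\dim I_{Z,m}=n$ exactly, so any basis of $I_{Z,m}$ consists of $n$ forms of degree $m$ in $\PP^n$. B\'ezout then implies that $Z'$ is a zero-dimensional reduced complete intersection of $m^n$ points containing $Z$, so step (3) still produces a finite reduced set. When $n=2$, running step (4) of Algorithm~\ref{catalg} on the full set $Z'$ yields an over-determined linear system whose unique consistent solution, provided by \thmref{thm:uniqueness}, assigns $c_j=0$ for each $z_j\in Z'\setminus Z$; the algorithm therefore outputs the correct decomposition in one pass. When $n\ge 3$, no such automatic selection is available, so I would loop over the finitely many subsets $Z''\subset Z'$ of size $r=\rank(C_f)$ and attempt step (4) on each; by \thmref{thm:uniqueness} the decomposition is realized by exactly the subset $Z''=Z$.

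The main obstacle is the scheme-theoretic base-locus claim in part (1): asserting that $V(I_{Z,m})=Z$ with reduced structure under the \emph{sharp} numerical hypothesis requires genuine input about the Hilbert function and regularity of $r$ general points in $\PP^n$, rather than a crude dimension count of $I_{Z,m}$. Given this classical ingredient, the rest of the argument is linear algebra on the catalecticant combined with the uniqueness statement in \thmref{thm:uniqueness}.
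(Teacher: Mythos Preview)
Your argument for $\ker C_f = I_{Z,m}$ is essentially the paper's argument, just phrased directly rather than through the general bundle framework of \propref{prop:baseloc}: both reduce to the linear independence of $v_1^{d-m},\ldots,v_r^{d-m}$, equivalently to the surjectivity of the evaluation map $S^{d-m}V^*\to\CC^r$ for $r$ general points.

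However, there are two genuine gaps.

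\textbf{Part (1), the base locus.} You correctly identify this as the main obstacle, but you do not supply the missing ingredient. The paper invokes a specific result of Chiantini--Ciliberto \cite[Theorem~2.6]{CC02}: if for a projective variety $X$ every $(r-2)$-plane through $r-1$ general points of $X$ meets $X$ in an $r$-th point, then $X$ lies in a linear subspace of small codimension. Applied with $X=v_m(\PP^n)$, this forces $\baseloc(I_{Z,m})=Z$ under the sharp hypothesis $r\le\binom{n+m}{n}-n-1$. A naive ``enough sections'' argument does not give this; one really needs the non-weak-defectivity input from \cite{CC02}.

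\textbf{Part (2), $n=2$.} Your appeal to \thmref{thm:uniqueness} is not sufficient to conclude that step~(4) returns a unique solution on the full set $Z'$. Uniqueness of the \emph{minimal} Waring decomposition says nothing about whether the $m^2$ powers $\{v_j^{d}:z_j\in Z'\}$ are linearly independent in $S^{d}V$; if they are dependent, the linear system $f=\sum_{j} c_j v_j^{d}$ has an affine family of solutions and the algorithm cannot single out the one supported on $Z$. The paper closes this by a cohomological argument specific to $n=2$: from the Koszul-type sequence
\[
0\rig{}\O\rig{}\O(m)^2\rig{}\I_{Z'}(2m)\rig{}0
\]
one gets $h^1(\I_{Z'}(2m))=0$, hence the $m^2$ powers $v_j^{2m}$ are linearly independent, and the linear system has a unique solution (necessarily with $c_j=0$ off $Z$). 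For $n\ge 3$ this independence fails, which is exactly why the paper, like you, resorts to looping over subsets.
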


{ As mentioned in the introduction, many of our statements in this section are consequences of more general results that are proved later in ~\secref{sec:bundle method}. While the reader may better understand the following arguments after the methods in ~\secref{sec:bundle method} are presented, we believe that it is worthwhile to anticipate their use now so that the simple pattern in this case may be seen.}

\begin{proof} 
First notice that by \thmref{thm:uniqueness}, we know that in this case we have a unique decomposition.  

Here we use the more general theory applied to this specific case.
Now we prove (1). 
By\propref{prop:baseloc}, {with $E= \O(m)$ and $L=\O(d)$,}  we have the inclusion $I_{Z,m} \subseteq \ker C_{f}$, and by the same Proposition, the equality holds if we can show that  the map $H^{0}(E^{*}\otimes L )\rig{} H^{0}(E\otimes L_{\mid Z})$ is surjective. For our particular choice of $E$ and $L$, we are considering the map $H^{0}(\O(-m+d)) \rig{}H^{0}(\O(-m+d)_{\mid Z}) $.
Let $m'= d-m = \left\lfloor\frac{d}{2}\right\rfloor$. So this amounts to showing that the natural evaluation map 
\begin{align*}
S^{m'}V^{*}\rig{}& \CC^{r}\\
f \longmapsto& (f(v_{1}),\ldots,f(v_{r}))
\end{align*}
 is surjective.
 But this is equivalent to the condition that $\codim (I_{Z,m'}) = r$. The preceding condition is satisfied because (by a basic dimension counting argument) $r$ general points give independent conditions on hypersurfaces of degree $m'$ when $r$ is bounded by ${{n+m'}\choose m'}$. It follows that $\rank C_{f}=r$ and we can apply \propref{prop:equality} and conclude that $I_{Z,m'} =\ker C_{f}$.

Next we will use the following theorem.
\begin{theorem}\cite[Theorem 2.6]{CC02}, Assume that $r\leq {{n+m}\choose m}-n-1$.
Let $X$ be an irreducible projective variety.
For $r\ge 3$, if every $(r-2)$-plane spanned by general points $x_1,\ldots, x_{r-1}$ 
also meets $X$ in an $r$-th point $x_r$ different from $x_1,\ldots, x_{r-1}$, then $X$ is contained in a linear subspace $L$, with $\codim_L(X)\le r-2$.
\end{theorem}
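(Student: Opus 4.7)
The plan is to proceed by induction on $r$, reducing to smaller cases by projecting $X$ from a general one of its points. The base case $r = 3$ is the statement: if every general secant line of an irreducible variety $X \subset \PP^N$ is a trisecant, then $X$ is contained in a linear subspace $L$ with $\codim_L(X) \le 1$. This base case I would derive from standard trisecant-lemma arguments: fix a general $p \in X$ and consider the projection $\pi_p\colon X \dashrightarrow X'$; the hypothesis produces a generically defined involution $y \leftrightarrow z(y)$ on $X$ (where $z(y)$ is the third intersection of $\overline{py}$ with $X$), making $\pi_p|_X$ at least $2$-to-$1$; a calibration of degrees and linear spans then forces $\dim\langle X\rangle \le \dim X + 1$.

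For the inductive step, assume the result for $r - 1$. Let $x_1 \in X$ be a general point and $\pi_{x_1}\colon \PP^N \dashrightarrow \PP^{N-1}$ the projection from $x_1$; set $X' := \overline{\pi_{x_1}(X)}$. For general $x_2, \ldots, x_{r-1} \in X$, the images $y_i := \pi_{x_1}(x_i)$ are general in $X'$, and the $(r-3)$-plane $\Lambda' := \langle y_2, \ldots, y_{r-1}\rangle$ is the image of $\Lambda := \langle x_1, \ldots, x_{r-1}\rangle$. By hypothesis $\Lambda$ contains a further point $x_r \in X$, and projecting yields $y_r := \pi_{x_1}(x_r) \in \Lambda' \cap X'$. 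Assuming (generically) $y_r \ne y_i$ for $i \ge 2$, the variety $X'$ satisfies the hypothesis for $r - 1$, hence by induction lies in a linear subspace $L' \subset \PP^{N-1}$ with $\codim_{L'}(X') \le r - 3$. Provided $X$ is not itself a linear space (a trivial case of the conclusion), $\pi_{x_1}|_X$ is generically finite, so $\dim X = \dim X'$; pulling $L'$ back through $\pi_{x_1}$ gives a linear $L \subset \PP^N$ with $\dim L = \dim L' + 1$ containing $X$, yielding $\codim_L(X) = \codim_{L'}(X') + 1 \le r - 2$.

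The main obstacle in this plan is the generic non-degeneracy claim in the inductive step: that for general $x_1, \ldots, x_{r-1} \in X$, the extra intersection point $x_r$ is not collinear with $x_1$ and any other $x_i$. If such collinearity were forced for every choice, the projection would lose the extra intersection in $X'$ and the induction would collapse. I would resolve this by analyzing the irreducibility of the incidence scheme parametrizing $r$-tuples $(x_1, \ldots, x_r) \in X^r$ satisfying the hypothesis; the collinearity condition $x_r \in \overline{x_1 x_i}$ carves out a proper closed subvariety, so non-collinearity is the generic behavior. A secondary delicacy is handling the case where $X$ is a cone, which changes how the projection reduces dimension; this is either excluded at the outset by choosing $x_1$ sufficiently general, or shown to imply an even stronger codimension bound directly.
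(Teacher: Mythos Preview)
The paper does not prove this statement: it is quoted verbatim from \cite[Theorem~2.6]{CC02} inside the proof of \thmref{thm:IKbound} and used as a black box, so there is no ``paper's own proof'' to compare your proposal against. (Note also that the first line ``Assume that $r\le \binom{n+m}{m}-n-1$'' is not part of the Chiantini--Ciliberto theorem; it is the running hypothesis of the ambient argument and has been inadvertently spliced into the quoted statement.)

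That said, your inductive strategy via projection from a general point of $X$ is precisely the classical route to this generalized trisecant lemma, and is the argument in \cite{CC02}. Your identification of the delicate point---that the extra intersection $x_r$ must not, generically, lie on a line through $x_1$ and some other $x_i$---is correct, and your proposed resolution (showing this is a proper closed condition on the irreducible incidence variety) is the standard way to handle it. The base case $r=3$ is the ordinary trisecant lemma. So while there is nothing in the present paper to compare with, your sketch is sound and aligned with the original source.
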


Let $Z' = \baseloc(\ker C_{f})$. We claim that $Z'=Z$, and the equality $I_{Z,m} =\ker C_{f}$ implies that the points $Z'
$ can be used to give the decomposition of $f$. 
Indeed, we just showed that $Z' = \baseloc(I_{Z,m})$, and the latter is equal to $Z$ by applying Theorem 2.6 in \cite{CC02}, stated above.

In the odd case, note that we will have $C_{f}\colon S^{m}V \rig{} S^{m-1}V$, and we always have ${{n+m-1}\choose n}\le {{n+m}\choose n} -n-1$.

For the proof of (2), now $Z'$ will be given by $m^{n}$ points, complete intersection of $n$ hypersurfaces of degree $m$. Note that for $n\geq 2$ we have ${{n+m}\choose m} -n\leq m^{n}$.  For $n=2$, the 
$m^{2}$ points impose independent conditions on the hypersurfaces (curves) of degree $2m$.  Indeed, from the sequence
{\[
0 \rig{} \O \rig{} \O(m)^{2} \rig{} \I_{Z'}(2m)  \rig{}0
,\]}
one shows that {$h^{1}(\I_{Z'}(2m))=0$. Denote $Z'=\{v_1,\ldots ,v_{m^2}\}$. The last vanishing implies that the powers
$(v_i)^{2m}$ for $i=1,\ldots, m^2$ are linearly independent.}  Then the linear system in step (4) in the algorithm has a unique {solution ,}
 which moreover only uses $\rank(C_{f})$ of the points.

For $n\geq 3$, since the $v_{i}^{d}$ are no longer independent, the linear system in step (4) of the algorithm will no longer have finitely many solutions using all the forms.  However we can overcome this problem by repeating step (4) with each subset of $Z'$ of size $\rank (C_{f})$ until we find the decomposition.

For the rest of the theorem, apply Algorithm~\ref{catalg} and refer to \thmref{thm:metathm}.
\end{proof} 

\subsection{Limits of the catalecticant method}\label{sec:cat limit}
{As in the proof of \thmref{thm:IKbound},} let $m'=\left\lfloor\frac d2\right\rfloor$. Another way to see that the catalecticant method 
can work for polynomials of rank $r$ only if $r< {{n+m'}\choose{ m'}}$, is the following.
 The maximum rank of any catalecticant matrix $C_f^{m}\colon S^{m}V^{*} \rig{}S^{m'}V$ is ${{n+m'}\choose {m'}}$.  If $f$ has this rank or greater, we will either (in the even case) have no kernel to work with, or (in the odd case) fail to satisfy the equality $I_{Z,m'}=\ker C_{f}^{m}$.
 
{Usually the general rank} $\frac{{{n+d}\choose d}}{n+1}$ is larger than ${{n+m'}\choose {m'}}$, the first example
being $v_3(\PP^2)$, where the catalecticant matrices have size $3\times 6$ or $6\times 3$.
The equation of $\sigma_3(v_3(\PP^2))$, which is called the Aronhold invariant,
cannot be found as a minor of any catalecticant matrix.
It can be obtained as a Pfaffian of a Koszul flattening (see \cite{Ott2}).  In practice, one should start by applying the catalecticant method, and if it fails to produce the Waring decomposition, this implies that the rank is larger than the bounds listed in \thmref{thm:IKbound}. Next we introduce new algorithms that will succeed to decompose tensors in a larger range of ranks.

\section{New methods for tensor decomposition: Koszul flattening and eigenvectors of tensors}\label{sec:Koszul}
Note that the following definition also appeared in \cite{Ott2} in a few special cases and was further developed in \cite{LO11}, both with a focus of finding equations of secant varieties. We also note that this construction was used in the case of partially symmetric tensors to find the ideal-theoretic defining equations of the $k$-th secant variety (with $k\leq 5$) of $Seg(\PP^{2}\times \PP^{n})$ embedded by $\O(1,2)$ in \cite{CEO}.
Here our presentation is focused on using this construction to find decompositions of tensors via eigenvectors. 

The general setting of this section is the following.
Let $ f \in S^dV$ and fix $0\le a\le n$, $1\le m\le d-1$.
We construct a linear map
\begin{equation}\label{eq:pfdef1}
P_{ f }\colon \Hom(S^m V, \bw aV)\to \Hom(\bw {n-a}V, S^{d-m-1}V)
.\end{equation}
If $ f =v^d$ then the definition of $P_{v^d}$ is the following
\begin{equation}\label{eq:pfdef2}
P_{v^d}(M)(w)=\left(M(v^m)\wedge v\wedge w\right)(v^{d-m-1}),
\end{equation}
where $M\in \Hom(S^m V, \bw aV)$, $w\in \bw {n-a}V$ {and we fixed an isomorphism $\bw{n+1}V\simeq\C$}. 
If $ f $ any element of $S^{d}V$, the definition of $P_{ f }$ is extended by linearity.
{This extension is guaranteed by the fact that  on general decomposable tensors $f=v_1\otimes\ldots\otimes v_d$
the map $P_f$ has the following expression 
$$
P_{v_1\otimes\ldots\otimes v_d}(M)(w)=\sum_{\sigma}\left(M(v_{\sigma(1)}\otimes\ldots \otimes v_{\sigma(m)})\wedge v_{\sigma(m+1)}\wedge w\right)
(v_{\sigma(m+2)}\otimes\ldots\otimes v_{\sigma(d)})
,$$
which is visibly linear, where the summation is performed for $\sigma$ in the symmetric group of permutations on $d$ elements.
}

Although this definition might seem artificial at first glance, we now explain how it can be used. We wait until ~\secref{sec:presentations} for a more formal treatment of $P_{f}$ via a presentation of a vector bundle. The linear map $P_{f}$ can be explicitly computed by using Koszul matrices, which motivates the name \emph{Koszul flattening} that we give to $P_{f}$ and is intended to mirror the term symmetric flattening which it generalizes.  

In order to explicitly write down the matrix representing $P_{ f }$,
we need to recall the properties of the {\it Koszul complex}. It is the minimal resolution of
the field $\CC$ as an $R=\CC[x_0,\ldots , x_n]$-module.
Here we give some examples, but interested readers unfamiliar with the Koszul complex may wish to consult \cite{Eisenbud_syzygies}.
For $n=2$ the Koszul complex is
$$0\rig{}R(-3)\rig{k_3}R(-2)^3\rig{k_2}R(-1)^3\rig{k_1}R\rig{}\CC\rig{}0,$$
where
$$k_1=\bgroup\begin{pmatrix}
x_{0}& x_{1}& x_{2}\\
\end{pmatrix}\egroup$$
$$k_2=
\bgroup\begin{pmatrix}
0&{x_{2}}& {-x_{1}}\\
{-x_{2}}& 0& {x_{0}}\\
x_{1}& x_{0}&0\\
\end{pmatrix}\egroup.$$
For $n=3$ it is
$$0\rig{}R(-4)\rig{k_4}R(-3)^4\rig{k_3}R(-2)^6\rig{k_2}R(-1)^4\rig{k_1}R\rig{}\CC\rig{}0,$$
where
$$k_1=\bgroup\begin{pmatrix}
x_{0}& x_{1}& x_{2}& x_{3}\\
\end{pmatrix}\egroup$$
$$k_2=\bgroup\begin{pmatrix}
{-x_{1}}& {-x_{2}}& 0& {-x_{3}}& 0& 0\\
x_{0}& 0& {-x_{2}}& 0& {-x_{3}}& 0\\
0& x_{0}& x_{1}& 0& 0& {-x_{3}}\\
0& 0& 0& x_{0}& x_{1}& x_{2}\\
\end{pmatrix}\egroup.$$

In general we have
$$k_i\colon R(-i)^{\binom{n+1}{i}}\rig{}R(-i+1)^{\binom{n+1} {i-1}}.$$
The matrix $k_i$ is a Koszul matrix. It corresponds to the presentation of the vector bundle 
$\bw {n+1-i}Q (-i)$ and on the point $\langle v\rangle\in\PP V$ it corresponds to the wedge product 
$\bw {n+1-i}V\rig{\wedge v}\bw {n+2-i}V$.  We note that the Koszul complexes can be easily computed by any standard computational algebra system such as Macaulay2 \cite{M2}.

Next we recall a result from \cite{LO11} that gives an explicit version of this construction. 

\begin{lemma0}
Let $ f \in S^dV$ .
The matrix $P_{ f }\colon \Hom(S^mV,\bw aV)\to \Hom(\bw {n-a}V, S^{d-m-1}V)$
can be computed using the matrix $k_{n+1-a}$  of the Koszul complex,
of size ${{n+1}\choose {a}}\times {{n+1}\choose {a+1}}$, where at the place
of the indeterminate $x_i$ we substitute the catalecticant matrix $C_{ f_i}^m$ of size
${{n+d-m-1}\choose n}\times {{n+m}\choose n}$,
where $ f_i=\frac{\partial f }{\partial x_i}$.
The matrix $P_{ f }$ obtained has size 
$$ \left[{{n+m}\choose n}{{n+1}\choose a}\right]\times \left[{{n+d-m-1}\choose n}{{n+1}\choose {a+1}}\right].$$
\end{lemma0}
\begin{proof} See \cite[Section~8.3]{LO11}.
\end{proof}

Now we propose the following
\begin{defn0}
Given $M \in \Hom(S^{m}V, \bw{a} V)$, a vector $v\in V$ is called an \emph{eigenvector  
of the tensor} $M$ if
 \[M(v^{m})\wedge v = 0.\]
\end{defn0}
For $m=a=1$ this is a usual eigenvector, {and for $a=1$, any $m$ this agrees with the notion of \cite{Lim05, Qi05}.}

Now we have the following important lemma (whose proof is straightforward) that we would like to emphasize.
\begin{lemma0}\label{lem3.1}
Let $M\in \Hom(S^{m}V,\bw{a} V)$.
\begin{enumerate}
\item A vector $v\in V$ is an eigenvector of $M$ if and only if $M \in \ker(P_{v^{d}})$.
\item Let $ f =\sum v_i^d$.  If each $v_{i}$ is an eigenvector of $M$, then $M\in\ker P_{ f }$.
\end{enumerate}
\end{lemma0}

Cartwright and Sturmfels have recently found a formula computing the number of eigenvectors of a general tensor $M\in \Hom(S^mV,V)$, which is the case $a=1$ in our construction.
The following theorem generalizes the Cartwright-Sturmfels formula to any $a$.
\begin{theorem}\label{thm:counting}
For a general $M\in \Hom(S^mV, \bw aV)$, the number of $[v]\in \PP V$ such that $M(v^m)\wedge v=0$ is given by
\begin{itemize}
\item{} $\begin{cases} m \text{ for } a=0,2 \text{ and } n=1,\\
                     \infty \text{ when }a=0,n+1 \text{ and } n>1.\end{cases}$
\item{}$\frac{m^{n+1}-1}{m-1}$ for $a=1$
\item{}$0$ for $2\le a\le n-2$
\item{}$\frac{(m+1)^{n+1}+(-1)^n}{m+2}$ for $a=n-1$.
\end{itemize}
\end{theorem}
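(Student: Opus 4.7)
The plan is to realize the eigenvector set as the zero locus of a global section of a vector bundle on $\PP V\cong\PP^n$, and then count those zeros by Chern classes. On $\PP V$ let $Q$ denote the tautological rank-$n$ quotient from the Euler sequence $0\to\O(-1)\to V\otimes\O\to Q\to 0$. Taking $a$-th exterior powers and twisting by $\O(m)$ yields
\[0\to\bw{a-1}Q(m-1)\to\bw aV\otimes\O(m)\to\bw aQ(m)\to 0.\]
A tensor $M\in\Hom(S^mV,\bw aV)=H^0(\bw aV\otimes\O(m))$ descends to a section $\sigma_M\in H^0(\bw aQ(m))$. At $[v]$ the fiber of the subbundle is $v\wedge\bw{a-1}V$, so $\sigma_M([v])=0$ iff $M(v^m)\in v\wedge\bw{a-1}V$, i.e.\ $M(v^m)\wedge v=0$. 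Hence the eigenvectors of $M$ are exactly the points of the zero scheme $Z(\sigma_M)$.

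Next I would compare $\rank\bw aQ(m)=\binom{n}{a}$ with $\dim\PP^n=n$. For $m\ge 1$, Bott's formula gives $H^1(\PP^n,\bw{a-1}Q(m-1))=0$ (after identifying $\bw{a-1}Q$ with $\Omega^{n-a+1}(n-a+2)$), so the natural map $\Hom(S^mV,\bw aV)\to H^0(\bw aQ(m))$ is surjective. Since $\bw aQ(m)$ is globally generated (as a quotient of copies of $\O(m)$), Kleiman-Bertini transversality forces $Z(\sigma_M)$ to be smooth of codimension $\binom{n}{a}$ for general $M$. This gives three regimes: when $\binom{n}{a}>n$, which is exactly the range $2\le a\le n-2$ (nonempty only for $n\ge 4$), $Z(\sigma_M)$ is empty and the count is $0$; when $\binom{n}{a}<n$, which holds at the endpoints $a=0$ and $a=n+1$ for $n>1$, the zero scheme is positive-dimensional, giving $\infty$; and when $\binom{n}{a}=n$, i.e.\ $a\in\{1,n-1\}$, the count equals $\deg c_n(\bw aQ(m))$.

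For the two finite cases I would compute the top Chern classes explicitly. From the $K$-theory identity $V\otimes\O(m)=\O(m-1)+Q(m)$ one gets $c_t(Q(m))=(1+mht)^{n+1}/(1+(m-1)ht)$, and the coefficient of $h^nt^n$ is extracted via the combinatorial identity
\[\sum_{k=0}^n\binom{n+1}{k}\alpha^k\beta^{n-k}=\frac{(\alpha+\beta)^{n+1}-\alpha^{n+1}}{\beta}\]
with $\alpha=m,\ \beta=1-m$, yielding $c_n(Q(m))=\frac{m^{n+1}-1}{m-1}$, the $a=1$ formula of Cartwright-Sturmfels. For $a=n-1$, use $\det Q=\O(1)$ to write $\bw{n-1}Q=Q^*\otimes\det Q=Q^*(1)$, so $\bw{n-1}Q(m)=Q^*(m+1)$; the same extraction from $c_t(Q^*(m+1))=(1+(m+1)ht)^{n+1}/(1+(m+2)ht)$ with $\alpha=m+1,\ \beta=-(m+2)$ produces $\frac{(m+1)^{n+1}+(-1)^n}{m+2}$. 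The first bullet ($n=1$) is then handled by the same formulas (the $a=n-1=0$ formula evaluates to $m$) together with a direct count for $a=n+1=2$, where $M$ reduces to a binary form of degree $m$ with $m$ roots.

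The main technical hurdle is precisely the Bott-style vanishing $H^1(\PP^n,\bw{a-1}Q(m-1))=0$ invoked above: it is what makes the passage ``$M$ general $\Rightarrow\ \sigma_M$ general in $H^0(\bw aQ(m))$'' valid, and therefore what lets Kleiman-Bertini guarantee transversality of $Z(\sigma_M)$. Everything else---the cohomological setup, the case analysis by rank versus dimension, and the extraction of the top Chern class---is then a routine consequence.
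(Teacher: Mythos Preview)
Your proposal is correct and follows essentially the same route as the paper: identify the eigenvectors as the zero locus of the induced section of $\bw aQ(m)$ (the paper's Lemma~\ref{lem:quotient}), then count via the top Chern class, with the trichotomy governed by whether $\rank\bw aQ=\binom{n}{a}$ exceeds, equals, or falls below $n$. The only cosmetic differences are that the paper computes $c_n(Q(m))$ and $c_n(Q^*(m+1))$ directly from $c_i(Q)=1$ and $c_i(Q^*)=(-1)^i$ rather than via your generating-function identity, and that you are more explicit about the genericity step (global generation plus Bertini) which the paper leaves implicit; incidentally, the $H^1$-vanishing you invoke is not strictly necessary, since the image of $\Hom(S^mV,\bw aV)$ already globally generates $\bw aQ(m)$ by construction, so Bertini applies to that linear system regardless of whether it is all of $H^0$.
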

\begin{proof} The non-trivial cases follow from Propositions~\ref{prop:CS} and~\ref{prop:wedge_eigen}.
\end{proof}
For $a=0$ the condition becomes $M(v_i^m)=0$ and $P_{ f }$ reduces to the catalecticant.
The cases $a=1$ and $a=n-1$ are a bit special. Note that Hilbert's quintic example from the introduction fits the case $a=1$ (and agrees with the case $a=n-1$ since $n=2$), while we will see that Sylvester's pentahedral example fits in the case $a=n-1$.

As we mentioned in the introduction, in the case $a=1$, the iterative methods of \cite{KoldaMayo} may be used to find eigenvectors of tensors in $\Hom(S^{m}V,V)$, however for general $a$, more work needs to be done in order to efficiently find eigenvectors of tensors in $\Hom(S^{m}V,\bw{a}V)$.
The idea, like in the matrix case, is to iterate the map
$$v_{k+1}=\frac{M(v_{k}^{m})}{\|M(v_{k}^{m})\|},$$
and to successively approximate the eigenvectors, starting from the dominant one and repeating until all eigenvectors are found.

Next we describe a general algorithm to decompose polynomials, which corresponds to the case $a=1$, any $m$.  This algorithm is especially effective in the case $n = 2$.
\begin{algor0}[Koszul Flattening Algorithm 1]\label{alg:koszul}
\hfill\par
\textbf{Input:} $f\in S^{d}V$, where $\dim V = n+1$.
\begin{enumerate}
\item Construct, via \eqref{eq:pfdef1} and \eqref{eq:pfdef2}, $P_{ f }\colon \Hom(S^{m}V,V)\to \Hom(\bw {n-1}V, S^{d-m-1}V)$.
\item Compute $\ker P_{f}$ and note that $\rank (f)\geq \frac{\rank (P_{f})}{n}$.
\item Find $Z'$, the common (projective) eigenvectors of a basis of the kernel of $P_{f}$.
\begin{enumerate}
\item If $Z'$ is not given by finitely many reduced points, stop; this method fails. 
\item Else continue with $Z' =\{[v_1],\ldots , [v_s]\}$.
\end{enumerate}
\item Solve the linear system on the constants $c_{i}$ defined by setting $f =\sum_{i=1}^s c_iv_i^d$.
\end{enumerate}
\par\textbf{Output:} The unique Waring decomposition of $f$.
\end{algor0}

Now we state sufficient conditions in the case $n=2$ and $d$ is odd for Algorithm~\ref{alg:koszul} to succeed.  (In the case $d$ is even we don't state conditions because our experiments show that the Catalecticant Algorithm covers all cases that the Koszul algorithm can cover).

\begin{theorem}\label{thm:k2bound} Suppose $n=2$ and set $d=2m+1$. Let $f=\sum_{i=1}^rv_i^{d}$ be a general form of rank $r$ in $S^{d}V$, {let $z_i=[v_i]\in\PP(V)$ 
be the corresponding points and let $Z=\{z_1,\ldots, z_r\}$.}
Let $Z'$ be the set of common eigenvectors (up to scalars) of $\ker P_f$.
\begin{enumerate}
\item If $2r\le m^{2}+3m + 4$ then $Z' =Z$.
Moreover Algorithm~\ref{alg:koszul} produces the unique Waring decomposition of $f$.
\item If $2r \leq m^{2} + 4m +2$, then it is possible that $Z \subsetneq Z'$. Even in this case, Algorithm~\ref{alg:koszul} will produce the unique minimal Waring decomposition. 
\end{enumerate}
\end{theorem}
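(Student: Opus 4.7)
The proof mirrors that of \thmref{thm:IKbound}, replacing the catalecticant by the Koszul flattening $P_f$ and carrying out the bundle-theoretic analysis of \secref{sec:bundle method} with $a=1$. Since $2r\le m^{2}+4m+2$ falls well below the generic rank in the stated range, \thmref{thm:uniqueness} guarantees that $f$ has a unique decomposition, so $Z=\{z_1,\dots,z_r\}\subset\PP^{2}$ is well defined, and \lemref{lem3.1}(2) gives $Z\subseteq Z'$ for free.

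For part~(1), I apply \propref{prop:baseloc} with $L=\O(d)$ and with $E$ the vector bundle on $\PP^{2}$ presented by the Koszul matrix $k_{n}=k_{2}$, so that $E^{*}\otimes L$ is the twist of $Q^{*}$ whose global sections govern the eigenvector equations of elements of $\Hom(S^{m}V,V)$. The proposition produces the inclusion $H^{0}(\I_{Z}\otimes E^{*}\otimes L)\subseteq \ker P_{f}$, with equality once the evaluation map $H^{0}(E^{*}\otimes L)\to (E^{*}\otimes L)_{|Z}$ is surjective. A Riemann-Roch computation via the Euler sequence on $\PP^{2}$ shows that $r$ general points impose independent conditions on the relevant sections exactly in the range $2r\le m^{2}+3m+4$. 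Once the kernel of $P_{f}$ is identified with $H^{0}(\I_{Z}\otimes E^{*}\otimes L)$, the common eigenvector scheme $Z'$ equals the base locus of that sheaf, and invoking \cite[Theorem 2.6]{CC02} exactly as in \thmref{thm:IKbound} rules out spurious base points, forcing $Z'=Z$. Step~(4) of Algorithm~\ref{alg:koszul} then recovers the unique decomposition.

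For part~(2), the evaluation map can fail to be surjective and one can only conclude $Z\subseteq Z'$; however $Z'$ is still a finite zero-dimensional scheme by the counting in \thmref{thm:counting}. The crucial step is to show that the $d$-th powers of the linear forms corresponding to points of $Z'$ remain linearly independent: this follows from a cohomology vanishing $h^{1}(\I_{Z'}(d))=0$, deduced from a Koszul-type resolution of $Z'$ in complete analogy with the complete-intersection argument used in part~(2) of \thmref{thm:IKbound}. Linear independence then forces the system $f=\sum c_{i}w_{i}^{d}$ with $[w_{i}]\in Z'$ to have a unique solution, which automatically isolates the correct subset of size $\rank(f)$; if desired, one iterates step~(4) over subsets of $Z'$ of cardinality $\rank(f)$.

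The main obstacle is the cohomological bookkeeping at the heart of Step~(1): identifying $E$ explicitly, computing $h^{0}(E^{*}\otimes L)$ in closed form on $\PP^{2}$, and then controlling the postulation of $r$ general points so that independent conditions are imposed precisely when $2r\le m^{2}+3m+4$. A secondary subtlety is proving that $Z'$ does not acquire embedded structure or positive-dimensional components in the boundary regime of part~(2), which is what ultimately allows the linear-independence argument and the finite enumeration over subsets to succeed. Once these dimension counts and vanishings are secured, both assertions follow from the same template as the catalecticant case.
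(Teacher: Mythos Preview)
Your overall template is right, but the heart of the argument is missing. The paper's proof does not reduce part~(1) to a Riemann--Roch count plus \cite[Theorem~2.6]{CC02}; instead it relies on \thmref{thm:applicabilityP2}, whose two parts are proved by a genuine degeneration argument (specializing points onto a smooth conic and inducting on $m$). Both of your proposed shortcuts fail.

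First, Riemann--Roch alone cannot show that $r$ general points impose independent conditions on $H^{0}(Q(m))$: it computes $\chi(Q\otimes\I_{Z}(m))$, not $h^{0}$, and the vanishing $h^{1}(Q\otimes\I_{Z}(m))=0$ for general $Z$ is exactly the nontrivial content of \thmref{thm:applicabilityP2}(i). You have also transposed the bounds: independent conditions hold for $2r\le (m+1)(m+3)=m^{2}+4m+3$, whereas $2r\le m^{2}+3m+4$ is the (strictly smaller) bound guaranteeing that the base locus equals $Z$.

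Second, the appeal to \cite[Theorem~2.6]{CC02} ``exactly as in \thmref{thm:IKbound}'' does not go through here. In the catalecticant setting $\ker C_{f}=I_{Z,m}\subset H^{0}(\O(m))$ is a linear system of hypersurfaces, so base points correspond to Veronese points lying on a linear span and that theorem applies. In the Koszul setting $\ker A_{f}=H^{0}(Q\otimes\I_{Z}(m))$ consists of sections of a rank-two bundle, and ``base locus'' means the common eigenvector scheme, not the intersection of hypersurfaces; \cite[Theorem~2.6]{CC02} gives no control over that. The paper instead proves directly, via \thmref{thm:applicabilityP2}(ii), that the base locus of $H^{0}(Q\otimes\I_{Z}(m))$ is $Z$ when $2r\le m^{2}+3m+4$. (It also uses the symmetry $E^{*}\otimes L=Q^{*}(m+1)\cong Q(m)=E$ on $\PP^{2}$, which you do not make explicit.)

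Your sketch of part~(2) is closer: the paper does use a Koszul-type resolution, namely $0\to\O\to Q^{*}(m+1)\to\I_{Z''}(2m+1)\to 0$ for $Z''$ the zero locus of a single section of $Q(m)$, to deduce $h^{1}(\I_{Z''}(2m+1))=0$ and hence linear independence of the $v_{i}^{d}$. But note there is no need to iterate over subsets in the $n=2$ case.
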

We postpone the proof of \thmref{thm:k2bound} until we have the tools from the next section.

Before going on, we note that the map $P_{f}$ factors, and therefore it will have smaller rank than what may be first expected. In order to accurately explain this, we will need to use representation theory and the language of partitions and Young diagrams according to \cite{FultonHarris}. 

The map $P_{f}$ always has a non-trivial kernel, which comes from an analogy to the matrix case and the fact that every vector is an eigenvector of a scalar multiple of the identity.  First we notice that $\Hom(S^{m}V ,\bw{a} V)$ splits as the direct sum of two $\SL(V)$-modules, {via the Pieri rule, (see \cite[equation (6.9), p.79]{FultonHarris})}
\begin{equation}\label{eq:trace}
\Hom(S^mV,\bw{a} V)=\Gamma^{m^n}\otimes\bw{a} V=\Gamma^{(m+1)^{a},m^{n-a}}\oplus \Gamma^{(m)^{a-1},(m-1)^{n-a+1}},
\end{equation}
where in general for any partition $\pi$, $\Gamma^{\pi}$ is the $\SL(V)$-representation associated to $\pi$. In partitions, we use the notation $m^{i}$ to denote $m$ repeated $i$ times. Note that $\Gamma^{\pi}$ inherently depends on $V$, but we suppress this from the notation for simplicity.

\begin{lemma0}
$P_{f}$ restricted to $\Gamma^{(m)^{a-1},(m-1)^{n-a+1}}$ is zero.
\end{lemma0}
\begin{proof}
By the linearity of $P_{f}$ in $f$, it suffices to prove the lemma for $f= v^{d}$.
The essential fact that we use is that the representation $\Gamma^{m^{a-1},(m-1)^{n-a+1}}$ is isomorphic to a subspace of $\Hom(S^{m-1}V, \bw{a-1} V)$, which indeed splits as 
\[\Hom(S^{m-1}V, \bw{a-1} V)= \Gamma^{m^{a-1},(m-1)^{n-a+1}}\oplus \Gamma^{(m-1)^{a-2},(m-2)^{n-a+2}}.\]

So consider $M \in \Gamma^{m^{a-1},(m-1)^{n-a+1}} \subset \Hom(S^mV,\bw{a} V)$ .  {There is a natural equivariant map
$$
\begin{array}{ccc}
\Hom(S^{m-1}V,\bw{a-1}V)&\to &\Hom(S^{m}V,\bw{a}V)\\
N&\mapsto&\left[v^m\mapsto  N(v^{m-1})\wedge v\right]\\
\end{array}
$$
which is nonzero, so by Schur lemma it identifies the summands $\Gamma^{m^{a-1},(m-1)^{n-a+1}}$ in both sides.
We write $\tilde M$ as the  copy of $M$ in $\Hom(S^{m-1}V, \bw{a-1} V)$ according to this identification.

Then $ M (v^{m}) = \tilde M(v^{m-1})\wedge v$,} so for this $M$, 
\[P_{v^d}(M)(w)=\left(M(v^m)\wedge v\wedge w\right)(v^{d-m-1})= \left(\tilde M(v^{m-1})\wedge v\wedge v\wedge w\right)(v^{d-m-1})  =0.\qedhere\]
\end{proof}

\subsection{The quotient bundle and eigenvectors of tensors}\label{sec:tangent bundle}
We show in this section that the eigenvectors of a tensor can be interpreted as zero loci of sections of twists of the quotient bundle. Similar methods were recently used in \cite{OttSturm} to study matrices with eigenvectors in a given subspace. For generalities about vector bundles we refer to \cite{OSS}.
$\PP V$ is the projective space of lines {in} $V$, therefore $H^0(\PP V, \O(1))=V^*$.
The quotient bundle of $\PP V$ which we will denote by $Q$, appears in the Euler exact sequence
\[0\rig{}\O_{\PP V}(-1)\rig{}\O_{\PP V}\otimes V\rig{}Q\rig{}0,\]
taking wedge powers and tensoring by $\O(m)$ we get the sequence
\[
\bw{a-1} V\otimes \O(m-1)\rig{} \bw{a}V \otimes \O(m)\rig{}\bw{a}Q(m)\rig{}0.
\]
Taking the global sections we get
\begin{equation}\label{eq:euler}
\Hom(S^{m-1}V,\bw{a-1}V) \rig{}\Hom(S^mV,\bw{a} V)\rig{\phi} H^0\left(\bw{a}Q(m)\right)\rig{}0.\end{equation}

For any tensor $M\in\Hom (S^mV,\bw{a}V)$ we denote by $s_M$ the section of $\bw{a}Q(m)$ corresponding to $\phi(M)$ in sequence \eqref{eq:euler}.
We want to show that the eigenvectors of the tensor $M$ correspond to the zero locus of $s_M$.
To make this construction precise, we recall the following straightforward lemma.
\begin{lemma0}\label{lem:quotient}
\hfill\par
\begin{enumerate}
\item The fiber of $\bw{a}Q(m)$ at $x=\langle v\rangle$ is isomorphic to $\Hom(\langle v^m\rangle,\bw{a}V/\langle v\wedge \bw{a-1}V\rangle)$.
\item
The section $s_M$ vanishes in $\langle v\rangle$ if and only if 
$v$ is an eigenvector of the tensor $M$.
\end{enumerate}
\end{lemma0}
\begin{proof}
The section $s_M\in H^0(\bw{a}Q(m))$ corresponds on the fiber of $v$ to the composition
$\langle v^m\rangle\rig{i}S^mV\rig{M} \bw{a} V\rig{\pi} \bw{a} V/\langle v\wedge \bw{a-1} V\rangle$
where $i$ is the inclusion and $\pi$ is the quotient map.
Now $s_M$ vanishes on $\langle v\rangle$ if and only if $\pi(M(v^m))=0$ if and only if $M(v^m)\wedge v =0$.
\end{proof}
\begin{remark}
Note that in the decomposition of formula \eqref{eq:trace}, 
$\Hom(S^mV,V)=\Gamma^{m^n}\otimes V=\Gamma^{m+1,m^{n-1}}\oplus S^{m-1}V^{*}$, we have 
from Bott's theorem that $H^0(Q(m))=\Gamma^{m+1,m^{n-1}}$. 
\end{remark}

Now we have the proper tools to anticipate the proof of \thmref{thm:k2bound}, 
even if we need some results of Sections~\ref{sec:bundle method}
 and~\ref{sec:sufficient conditions}.
\begin{proof}[Proof of \thmref{thm:k2bound}]
First notice that by \thmref{thm:uniqueness}, we know that in this case we have a unique decomposition.  

Now we prove (1) by applying \propref{prop:baseloc}.  To match with the notation of \propref{prop:baseloc}, take  $E = Q(m)$, a twist of the quotient bundle on $\PP^{2}$, and $L = \O(d)$ with $d=2m+1$. (See also \exref{ex:pfexample} for more details on the matrix version of this construction.)  By \propref{prop:baseloc}, we have the inclusion 
$H^{0}(Q\otimes \I_{Z}(m)) \subseteq \ker  A_{f}$.
Note that $E^{*}\otimes L=Q^{*}(-m+2m+1)=Q^{*}(m+1)=Q(m)=E$.
The  equality holds if the  map
$H^{0}(Q(m)) \rig{} H^{0}(Q_{Z}(m))$,
is surjective.
This is true by  \thmref{thm:applicabilityP2}(i).

It follows that $\rank A_f=\rank P_{f}=2r$ and we can apply \thmref{thm:metathm}. 
Moreover, in this case, $Z'=Z$. Indeed, we just showed that $Z'$ is the base locus of $H^{0}(Q\otimes \I_{Z}(m))$, and the latter is equal to $Z$ because of (ii) of \thmref{thm:applicabilityP2}.

For the proof of (2), now $Z'$ is contained in the zero-locus $Z''$ of a section of $Q(m)$.
It is enough to show that $Z''$ imposes independent conditions on the hypersurfaces of degree $2m+1$.
Indeed, from the sequence
\[
0 \rig{} \O \rig{} Q^{*}(m+1) \rig{} \I_{Z''}(2m+1)  \rig{}0
,\]
and the vanishing $h^1(Q^{*}(m+1))=0$, \cite{OSS} Ch.1~\S~1, reading $Q^{*}(m+1)=\Omega^1(m+2)$, one shows that $h^{1}(\I_{Z''}(2m+1))=0$.  Then the linear system in step (4) in Algorithm~\ref{alg:koszul} has a unique solution (because the powers $v_{i}^{d}$ are linearly independent), which moreover only uses $\rank (P_{f})/2$ of the points.
\end{proof}

\subsection{Pentahedral example}\label{sec:pentahedral}

Here we treat the Sylvester pentahedral example, in an analogous way to the Hilbert quintic case treated in the introduction.
Let $ f \in S^3\CC^4$.
The classical approach to the Pentahedral Theorem is to use
the $10$ points {$p_i$} which are the singular locus of the Hessian of $ f $.
These ten points are the vertices of the pentahedral formed by the five planes $v_i$.
We do not know how to express the five planes rationally from $ f $ with this approach.
Enriques and Chisini, in the third book of their textbook
 \cite{Enriques_book}, attribute to Gordan the computation of
 the fifth degree covariant of $ f $ given by the five planes, in terms of symbolic calculus,
but we found difficult to explicitly compute the Gordan covariant.

Our approach covers both the proof of the theorem and the possibility
to find explicitly the five planes.

\begin{thm0}{\bf Sylvester Pentahedral Theorem}
For any general $ f \in S^3\CC^4$, there exist unique $v_i\in \CC^4$ (up to scalar)
and $c_i\in\CC$ for $i=1,\ldots , 5$
such that $$ f =\sum_{i=1}^5c_iv_i^3.$$ The algorithm to find the $v_i$
is described in the proof.
\end{thm0}

\begin{proof}

In the pentahedral case we have $ f \in S^3\CC^4$.
Set $a=2, m=1$ in the Koszul flattening construction.
This corresponds to constructing $P_{ f }\colon \Hom(\CC^4,\bw 2\CC^{4})\to \Hom(\CC^4, \CC^4)$.
This is a $16\times 24$ matrix coming from the $4\times 6$ Koszul matrix
\[k_{2}=\bgroup\begin{pmatrix}
{-x_{1}}& {-x_{2}}& 0& {-x_{3}}& 0& 0\\
x_{0}& 0& {-x_{2}}& 0& {-x_{3}}& 0\\
0& x_{0}& x_{1}& 0& 0& {-x_{3}}\\
0& 0& 0& x_{0}& x_{1}& x_{2}
\end{pmatrix}\egroup,\]
and substituting the $4\times 4$ catalecticant matrix $ C_{ f_i}^1$ at each occurrence of $x_i$.
Using Macaulay2 \cite{M2} it can be checked that the kernel of $P_f$ has dimension $9$ and it is spanned by $9$ vectors in $\CC^{24}\simeq \Hom(\CC^4,\bw{2} \CC^{4})$ 
{written as $\sum w_{ij}x^i\wedge x^j$ that can be grouped as
$\{w_{01},w_{02},w_{12},w_{03},w_{13},w_{23}\}$ with $w_{ij} \in \CC^{4*}$ linear forms.} 

 The general element $M$ of the kernel can be computed, again with the help of Macaulay2,
by a random linear combination of the nine elements of the basis and it has exactly {\it five} 
eigenvectors $v_i$ (up to scale), in agreement with \propref{prop:wedge_eigen},
thus proving the existence and uniqueness statement of the theorem. Explicitly the five eigenvectors
(which are dual to the planes of the pentahedral) can be computed using the
$4\times 4$ minors of 
\[\bgroup\begin{pmatrix}x_{2}&       x_{3}&       0&       0&w_{01}\\
      {-x_{1}}&       0&       x_{3}&       0&w_{02}\\
      x_{0}&       0&       0&       x_{3}&w_{12}\\
      0&       {-x_{1}}&       {-x_{2}}&       0&w_{03}\\
      0&       x_{0}&       0&       {-x_{2}}&w_{13}\\
      0&       0&       x_{0}&       x_{1}&w_{23}\\
      \end{pmatrix}\egroup,\]
where the first $4$ columns express $k_3$.

The zero locus of these minors is indeed formed by the five points corresponding to $v_i$.
\end{proof}
\begin{remark}\label{rmk:BCMT}
The recent results in the nice paper \cite{BCMT} makes use of a different generalization of the classical methods for tensor decomposition.  Their methods make use of a (semi-)Hankel operator constructed from all possible catalecticant matrices, and compute the linear forms in the decomposition of a tensor utilizing a method of zero-dimensional root finding involving simultaneous eigenvectors of companion matrices.  In theory, this method eventually covers all cases to decompose a general symmetric tensor, however there is an essential difference with our methods.  That is, their method relies on numerical techniques in order to construct the Hankel operator. As a consequence, for example in the pentahedral case, their technique gives the ideal of the 5 points numerically, while in our algorithm, we have found the ideal of the 5 points symbolically, and numerical techniques are used only to compute the individual points.  
\end{remark}

\section{New methods for tensor decomposition: bundle method}\label{sec:bundle method}
\subsection{The bundle construction}\label{sec:bundle construction}

Let $L$ be the line bundle on $X$ which gives the embedding {$X\subset \PP (H^0(X,L)^{*}) = \PP W$.}  
In particular $L=\O(d)$ on $\PP^n = \PP V$ gives the embedding of the Veronese variety $X=v_d(\PP^n) = v_{d}(\PP V)$, where in this case 
{$ S^{d}V = W$,}
defined in the introduction. 

Let $E$ be a vector bundle on $X \subset \PP W$.
{In \cite{LO11}  a linear map $A_{f}$ was constructed, depending linearly on $ f \in W$, which comes from the natural contraction map}
\begin{equation}\label{Af construction}
H^0(E )\otimes H^0(E^*\otimes L)\rig{} H^0(L).
\end{equation}
From \eqref{Af construction} we get a linear map
{\[H^0(E )\otimes H^0(L)^{*}\rig{} H^0(E^*\otimes L)^{*},\]}
and this can be seen as a linear map
\begin{equation}\label{eq:afdef}
A_{f}\colon H^0(E )\rig{} H^0(E^*\otimes L)^{*}
\end{equation}
depending linearly on {$f\in H^{0}(L)^*$.}

Our starting point is the following result from \cite{LO11}.

\begin{prop0}\cite[Proposition~5.4.1]{LO11} \label{prop:baseloc}
Let $ f  = \sum_{i=1}^{r}v_{i} \in W$ with $z_{i}=[v_i] \in X \subset \PP W$  and put $Z = \{z_{1},\ldots,z_{r}\}$.
\begin{align*}
H^{0}(\I_{Z}\otimes E  ) &\subseteq \ker A_{ f } \\
H^{0}(\I_{Z}\otimes E  ^{*}\otimes L) &\subseteq \left(\im A_{ f }\right)^{\perp}.
\end{align*}
{The first inclusion is an equality if $H^{0}(E  ^{*}\otimes L)\rig{} H^{0}(E  ^{*}\otimes L_{\mid Z})$ is surjective. 
The second inclusion is an equality if $H^{0}(E  )\rig{} H^{0}(E  _{\mid Z})$ is surjective. } 
\end{prop0}

Recall that the {\it base locus} of a space of sections of a bundle is the common zero locus of all the sections of the space.
Again, let $E$ be a vector bundle on {$X\subset \PP (H^0(X,L)^*)$.} 
Let $f=\sum_{[v_i]\in Z}v_i\in  H^0(X,L)^*$.
Assume that $H^0(E  ^{*}\otimes L)\rig{} H^{0}(E  ^{*}\otimes L_{\mid Z})$ is surjective.
Then the kernel of $A_f\colon H^0(E  )\to H^0(E  ^{*}\otimes L)^{*}$
is equal $H^0(\I_Z\otimes E  )$. So, if the base locus of
$H^0(\I_Z\otimes E  )$ is $Z$ itself, then the decomposition of $f$ can be computed from the base locus of $\ker A_f$. 

Some advantages of our algorithm are now apparent.
First, $\ker A_{f}$ can be computed by an explicit matrix construction that we give below and methods from linear algebra can be used to compute this kernel. 
Recall that there is always a brute force method where one guesses a rank $r$, chooses $r$ linear forms $p_{i} = p_{i}(x_{0},\ldots,x_{n})$ each depending on $n+1$ parameters, and tries to solve the system of polynomials given by comparing coefficients on the expression $f = \sum_{i=1}^{r}p_{i}^{d}$.  

So if one compares our method to the brute force method, 
$\ker A_{f}$ consists of polynomials of lower degree than the original polynomial $f$, and in general a system of polynomials of lower degree should be easier to solve than one consisting of polynomials of higher degree. 
We tested our algorithm using $r$ randomly chosen linear forms $l_{i}$, and tried to decompose (the expanded form of) $f = \sum_{i=1}^{r}l_{i}^{d}$. The brute force method quickly fails (we run out of memory and time) even for the pentahedral example, where our algorithm succeeds in less than one second.

\subsection{A side remark on presentations}\label{sec:presentations} A \emph{presentation} of a bundle is what allows us to make the transition between vector bundles and matrices.  For a bit more on presentations we invite the reader to consult \cite[Chapter 6C]{Eisenbud_syzygies}.
A presentation of a bundle $E  $  on $\PP V$ is constructed as follows. 
We have the (finite) minimal resolution of $E$, which is
\begin{equation}\label{eq:cx}
\ldots\rig{}L_{2}\rig{}L_{1}\rig{}E\rig{}0,
\end{equation}
where each $L_{i}$ is a direct sum of line bundles,
and has the property that the induced map $H^{0}(L_{1})\rig{}H^{0}(E)$ is surjective.
This follows because the resolution is constructed as the sheafification of the corresponding resolution of graded modules.

Moreover we have the (finite) minimal resolution of $E^{*}$ which is
\begin{equation}\label{eq:dualcx}
\ldots\rig{}L^{*}_{-1}\rig{}L^{*}_{0}\rig{}E^{*}\rig{}0,
\end{equation}
where each $L_{i}$ is again a direct sum of line bundles,
which has the property that, even tensoring with a line bundle $L$,
the induced map $H^{0}(L^{*}_{0}\otimes L)\rig{}H^{0}(E^{*}\otimes L)$
is surjective.
Dualizing, we get a double resolution
\[\ldots \rig{}L_{2}\rig{}L_{1}\rig{p}L_{0} \rig{} L_{-1}\rig{}\ldots,\]
where $\im(p)=E$. {The map $L_{1}\rig{p}L_{0}$ gives the {\it presentation} of $E$.}
We get the composition
\begin{equation}\label{eq:Pfsecond}
P_{f} = \beta\circ A_{f}\circ \alpha \colon H^{0}(L_{1})\rig{\alpha}H^{0}(E)\rig{A_{f}}H^{0}(E^{*}\otimes L)^{*}
\rig{\beta}H^{0}(L^{*}_{0}\otimes L)^{*}
,\end{equation}
where $\alpha$ is surjective (because of \eqref{eq:cx}, {which is the sheafification of the minimal free resolution of the module
$\oplus_m H^0(E(m))$}) and $\beta$ is injective (because of \eqref{eq:dualcx}).
It can be shown that the matrix of $P_{f}$ can be constructed {just by the presentation $p$} with a block structure obtained as follows. If $p$ has a linear entry depending on $x_{i}$, substitute the catalecticant matrix $C_{\frac{\partial f}{\partial x_{i}}}$ for each $x_{i}$ (where the size of the matrix is to be determined by $L_{1}$ and $L_{0}$), and if $p$ has non-linear entries, replace each monomial in the $x_{i}$ by the catalecticant matrix of the associated derivative of $f$, for more details see \cite[Section~8.3]{LO11}.

Hence $\rank A_{f}=\rank P_{f}$ and, even more important
$\ker A_{f}$ and $\ker P_{f}$ have the same base locus.
The advantage is that $P_{f}$ can be explicitly computed
from a matrix with entries homogeneous polynomials,
and $\ker P_{f}$ is spanned by computable polynomials.

\begin{example}\label{ex:pfexample}
For a basic example, suppose $d=2m+1$, and let $E  =Q (m)$ be a twist of the quotient bundle on {$\PP^{2 }=\PP V$, as in the case of \thmref{thm:k2bound}} 

This gives a presentation
\[L_1=\O(m)\otimes V\rig{p}\O(m+1)\otimes V^{*}=L_0,\]
which is part of the Koszul complex
{
\[0\rig{}\O(m-1)\rig{}\O(m)\otimes V\rig{p}\O(m+1)\otimes V^{*}\rig{}\O(m+2)\rig{}0\]
}
After an appropriate choice of basis, $p$ may be represented by the matrix
\[\begin{pmatrix}
0&x_2&-x_1\\
-x_2&0&x_0\\
x_1&-x_0&0\\
\end{pmatrix},\]
which is one of the Koszul matrices we have already seen.
Then $H^0(L_1)=S^{m}V^{*}\otimes V$.

Now let $L=\O(2m+1)$. For any $ f \in S^{2m+1}V^{*}=W$, $A_ f $ is the morphism from
$H^0(Q (m))$ to its dual  $H^0(Q ^{*}(m+1))^{*}$ and 
{\[P_f \colon H^0(L_1)=\Hom(S^{m}V,V) \rig{} \Hom(V,S^{m}V)=H^0(L_0^*\otimes L)^*\]} is represented by the matrix
\[\begin{pmatrix}
0&C^{m}_{ f_{2}}&-C^{m}_{ f_{1}}\\
-C^{m}_{ f_{2}}&0&C^{m}_{ f_{0}}  \\ 
C^{m}_{ f_{1}}&-C^{m}_{ f_{0}}&0\\
\end{pmatrix},\]
where $C^{m}_{ f_{i}} \colon S^{m}V \to S^{m}V^{*}$ are catalecticant matrices of the partial derivatives $ f_{i}=\frac{\partial f }{\partial x_{i}}$. From this presentation, we can see also that $P_{f}$ is skew-symmetric.
Note that when $ f =x_0^{2m+1}$ is the power of a linear form then 
the above matrix has rank $2$, which indeed is the rank of $E  =Q (m)$.  We further remark that then the principal Pfaffians of this matrix give equations for secant varieties of $v_{2m+1}(\PP^{2})$, see \cite{LO11}.
\end{example}
\subsection{Vector bundles, statement and proofs of the main results}\label{sec:proofs}
For this section we assume the following general setup.
Let $X$ be an algebraic variety and 
let $L$ be the line bundle on $X$ which gives the embedding {$X\subset \PP (H^0(X,L)^*) = \PP W$.}
For $f \in W$, let {$f=\sum_{i=1}^k v_i$} be a minimal decomposition, {let $z_i=[v_i]\in\PP(W)$ 
be the corresponding points and let $Z=\{z_1,\ldots, z_r\}$}
For a vector bundle $E$ over $X$,  construct the map $A_f\colon H^0(E  )\to H^0(E  ^{*}\otimes L)^{*}$ as above.
\begin{prop0}\label{prop:equality}
Assume that 
$\rank A_f=k\cdot \rank E  $.
Then we have 
\begin{align*}
H^{0}(\I_{Z}\otimes E  ) &= \ker A_{ f } \\
H^{0}(\I_{Z}\otimes E  ^{*}\otimes L) &= \left(\im A_{ f }\right)^{\perp}.
\end{align*}
\end{prop0}
\begin{proof}\propref{prop:baseloc} says that we always have the inclusion $H^{0}(\I_{Z}
\otimes E  ) \subseteq \ker A_{ f } $.  But in fact we get equality because of the following dimension argument.
\[
\codim(H^{0}(I_{Z}\otimes E)) \leq k\cdot \rank E = \rank (A_{f}) = \codim(\ker A_{f})
.\]
The same argument applies to the second equality.
\end{proof}

We can give a general criterion
\begin{theorem}\label{thm:metabundle}
%
Assume that 
$\rank A_f=k\cdot \rank E$
and 
$$H^0(\I_Z\otimes E  )\otimes H^0(\I_Z\otimes E  ^{*}\otimes L)\to H^0(\I_Z^2\otimes L)$$
is surjective.

Assume that $X$ is not $k$-weakly defective, then the common base locus of $\ker A_f$ and of $\left(\im A_{f}\right)^{\perp}$ is given by $Z$ itself, hence $Z$ can be reconstructed by $f$.

\end{theorem}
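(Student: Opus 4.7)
The plan is to combine \propref{prop:equality} with a contact-locus argument, using the surjectivity hypothesis to pass from ``base locus of $\ker A_f$'' to ``vanishing to second order'' and then invoking the non-defectivity hypothesis.

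First, I would apply \propref{prop:equality}. The hypothesis $\rank A_f = k\cdot\rank E$ matches the numerical assumption there, yielding the equalities
\[
\ker A_f = H^0(\I_Z\otimes E), \qquad (\im A_f)^{\perp} = H^0(\I_Z\otimes E^*\otimes L).
\]
Every section on either side vanishes on $Z$, so the inclusion $Z\subseteq\baseloc(\ker A_f)\cap\baseloc((\im A_f)^\perp)$ is automatic, and the whole content of the theorem is the reverse inclusion.

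Next, I would argue the reverse inclusion by moving to $H^0(\I_Z^2\otimes L)$ via the contraction pairing $E\otimes(E^*\otimes L)\to L$. Let $y$ lie in the common base locus. Then for every $s\in H^0(\I_Z\otimes E)$ and every $t\in H^0(\I_Z\otimes E^*\otimes L)$, both $s$ and $t$ vanish at $y$. A local trivialization shows at once that the contracted section $\langle s,t\rangle\in H^0(L)$ vanishes to order two at $y$, and of course to order two along $Z$ as well; hence $\langle s,t\rangle \in H^0(\I_Z^2\cdot\mathfrak{m}_y^2\otimes L)$. Invoking the surjectivity hypothesis, every section of $\I_Z^2\otimes L$ is a sum of such contractions, so every section of $\I_Z^2\otimes L$ vanishes to order two at $y$.

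Finally, I would translate this back to projective geometry and invoke non-weak-defectivity. Through the embedding $X\subset\PP W$ given by $L$, a section of $L$ vanishing to order two at a smooth point $z\in X$ corresponds exactly to a hyperplane of $\PP W$ containing the embedded tangent space $T_zX$. Thus the conclusion of the previous step says that every hyperplane tangent to $X$ along $Z$ is also tangent to $X$ at $y$. Since $f$ is a general element with decomposition length $k$, the $k$ points $z_1,\ldots,z_k$ are general on $X$, and the hypothesis that $X$ is not $k$-weakly defective means precisely that the contact locus of the general hyperplane tangent at $z_1,\ldots,z_k$ equals $\{z_1,\ldots,z_k\}$. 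So $y\in Z$, completing the reverse inclusion.

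The main obstacle is the last step: one has to be precise that ``not $k$-weakly defective'' gives not merely a zero-dimensional contact locus but one that coincides with $Z$ itself. This is the form of weak defectivity used by Chiantini--Ciliberto in their uniqueness theorem (\thmref{thm:uniqueness}), and it is exactly what is needed to rule out an extra base point $y\notin Z$. The other delicate check is the local second-order vanishing of $\langle s,t\rangle$, but this is a routine trivialization computation once one fixes local frames for $E$ and $L$.
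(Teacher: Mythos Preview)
Your proposal is correct and follows essentially the same approach as the paper's proof: both use \propref{prop:equality} to identify $\ker A_f$ and $(\im A_f)^\perp$ with $H^0(\I_Z\otimes E)$ and $H^0(\I_Z\otimes E^*\otimes L)$, then argue that an extra base point $z'$ would force every section of $H^0(\I_Z^2\otimes L)$ (via the surjectivity hypothesis and the contraction pairing) to be doubly tangent at $z'$, contradicting non-$k$-weak-defectivity. The paper's version is much terser and simply cites \cite[Theorem~1.4]{CC02} for the final contact-locus step, whereas you spell out the local second-order vanishing and the geometric translation; your closing caveat about needing the contact locus to equal $Z$ (not merely be zero-dimensional) is exactly the content of that cited theorem.
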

Note that the notion of $k$-weakly defective has been introduced in \cite[Definition 1.2]{CC02}.
{See also \thmref{thm:uniqueness} and the paragraph thereafter.}

\begin{proof}
We can use the equalities of\propref{prop:equality}.
Assume that  the common base locus of $\ker A_f$ and of $\left(\im A_{f}\right)^{\perp}$ contains $Z\cup\{z'\}$, then every element of $H^0(\I_{Z^2}\otimes L)$ vanishes doubly on $z'$,
that is the general hyperplane section {(in the system $H^0(L)$)} of $X$
which is singular at $Z$ is singular also at $z'$, and this contradicts \cite[Theorem~1.4]{CC02}.
\end{proof}

\begin{example} The example of plane sextics $v_6(\PP^2)$,
where $Z$ is given by $8$ points is instructive. In this case,
set $E=\O(3)$ and $L=\O(6)$, it is a classical fact that the map
$H^0(\I_Z\otimes E  )\otimes H^0(\I_Z\otimes E  ^{*}\otimes L)\to H^0(\I_Z^2\otimes L)$
is NOT surjective. Indeed, it is known from \cite{LO11} that the catalecticant minors are not enough to give all the equations of $\sigma_8(v_6(\PP^2))$, a Koszul flattening is needed in addition.
In this case, the kernel of $A_f$ is spanned by a pencil of plane cubics and has base locus in $9$ points,
one more than the original eight. This is linked to the known classical fact that the pencil of plane cubics through $8$  points has an additional ninth base point.
Still, in this case, by (2) of \thmref{thm:IKbound}, Algorithm~\ref{catalg} succeeds to find the tensor decomposition.
\end{example}

The theorem applies in the presentation setting as well.
In the case $E  =\bw aQ (\delta)$, there is the presentation
$p\colon L_1=\bw aV\otimes\O(\delta)\to \bw {n-a}V^{*}\otimes\O(\delta+1)=L_0$
such that $\im p=E  $.
Then $H^0(L_1)=\Hom(S^{\delta}V,\bw aV)$
and an element of $H^0(L_1)$ goes to a section of $E  $ vanishing in $v$
if it corresponds to $M\in \Hom(S^{\delta}V,\bw aV)$
such that $M(v^{\delta})\in v\wedge\left(\bw {a-1}V\right)$, i.e. if $v$ is an eigenvector of $M$.
In this case, the base locus of the kernel can be studied directly in $H^0(L_1)$.
The criterion applies to specific tensors. 

We now come to the main practical result of this paper.  
\begin{algor0}[General algorithm to find tensor decomposition]\label{alg:metaalgorithm} 
\hfill\par
\textbf{Input:} $f \in S^{d}V$ where $\dim V = n+1$, $E$ is a convenient vector bundle on $\P{{}}(V)$, to be chosen.
\begin{enumerate}
\item Construct the map $A_{f}$ as defined in \eqref{eq:afdef}, where $L=\O(d)$.
\item Compute $\ker A_{f}$. If $\ker A_{f}$ is trivial, stop, this method fails.
\begin{enumerate}
\item note that $ \rank (f) \geq \frac{\rank (A_{f})}{\rank (E)} $.
\end{enumerate}
\item Find the base locus $Z'$ of $\ker A_{f}$ by explicitly computing $\ker P_{f}$ as in \eqref{eq:Pfsecond}.
\begin{enumerate}
\item If $Z'$ does not consist of a finite set of reduced points, stop; this method fails.
\item Otherwise continue with $Z'= \{[v_1],\ldots , [v_s]\}$.
\end{enumerate}
\item Solve the linear system defined by $f=\sum_{i=1}^sc_iv_i^d$ in the unknowns $c_i$.
\end{enumerate}
\par\textbf{Output:} If there is a unique solution to (4), this is the unique Waring decomposition of $f$.  Else we possibly find many minimal Waring decompositions of $f$.
\end{algor0}
Note that \thmref{thm:metabundle} applies to step (3) in that it says that the baselocus computed actually consists of the linear forms used in the construction of $f$.

\begin{remark}\label{rmk:failure1}
In practice one finds that the algorithm will fail if either $\ker P_{f}$ is trivial, in which case we must conclude that the rank of the input is too large to be decomposed by this method (see also~\rmkref{rmk:failure2} below), or the base locus of $\ker P_{f}$ contains infinitely many points, in which case we cannot determine the decomposition.  In the latter case we conclude that there is a positive dimensional variety on which our input tensor lies. In some pathological cases it may happen
that the base locus of $\ker P_{f}$ is non reduced, also in this case we cannot determine the decomposition.
\end{remark}

The following theorem has already been applied in many specific examples,
{(starting from the case $\textrm{rk} E=1$ which corresponds to the catalecticant case), } that show the versatility and power of the result.  

\begin{theorem}\label{thm:metathm}
Let $f\in S^{d}V$, and set $Z' = \baseloc (\ker A_{f})$, and let $\text{length}(Z')=s$.
Assume that  {$ \rank (f) = \frac{\rank (A_{f})}{\rank (E)} $}. 
Then for any minimal decomposition $f=\sum_{i=1}^r c_iv_i$ with $z_{i}=[v_{i}]\in X$,
 $c_{i}\in \CC$ and $Z = \{z_{1},\ldots,z_{r}\}$, we have $Z \subset Z'$.

If $\text{length}(Z') <\infty$, then Algorithm~\ref{alg:metaalgorithm} produces all minimal Waring decompositions of $f$, in particular if the solution is unique, we find the unique Waring decomposition of $f$.
\end{theorem}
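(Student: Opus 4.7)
My plan is to deduce the theorem directly from Proposition~\ref{prop:equality}. Fix any minimal decomposition $f=\sum_{i=1}^r c_i v_i$, so $r=\rank(f)$, and let $Z=\{z_1,\dots,z_r\}$ with $z_i=[v_i]\in X$. The hypothesis $\rank(f)=\rank(A_f)/\rank(E)$ says precisely that $\rank A_f=r\cdot\rank E$, so Proposition~\ref{prop:equality} applies with $k=r$ and yields
\[
\ker A_f=H^0(\I_Z\otimes E).
\]
Every section of $\I_Z\otimes E$ vanishes on $Z$ by construction, so $Z$ is contained in the common zero locus of $\ker A_f$, i.e.\ $Z\subseteq\baseloc(\ker A_f)=Z'$. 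Since the chosen minimal decomposition was arbitrary, this proves the first assertion.

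Assume now that $\text{length}(Z')<\infty$, so that Algorithm~\ref{alg:metaalgorithm} proceeds to step~(4) with $Z'=\{[w_1],\dots,[w_s]\}$. By the first part, any minimal Waring decomposition $f=\sum_{i=1}^r c_i v_i^d$ has each $[v_i]\in Z'$; rescaling and rewriting in terms of the fixed representatives $w_j$, it takes the form $f=\sum_{j=1}^s a_j w_j^d$ with at most $r$ nonzero coefficients. Hence every minimal decomposition corresponds to a solution of the linear system posed in step~(4). Conversely, any solution of that system is itself a Waring decomposition of $f$ supported on $Z'$, and the minimal ones are precisely the solutions of minimum support. Therefore step~(4) outputs every minimal Waring decomposition of $f$; in particular when the linear system has a unique solution, that solution is the unique Waring decomposition.

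The argument is essentially formal once Proposition~\ref{prop:equality} is in hand, so no serious obstacle arises inside this proof itself. The genuine work is pushed into verifying the hypothesis $\rank A_f=r\cdot\rank E$ for the $f$ of interest, and into ensuring that $\baseloc(H^0(\I_Z\otimes E))$ is zero-dimensional and (ideally) equal to $Z$; these are handled by separate results such as Theorems~\ref{thm:k2bound},~\ref{thm:knbound} and~\ref{thm:metabundle}. Relative to those inputs, the present theorem simply packages the geometric identification $\ker A_f=H^0(\I_Z\otimes E)$ into an effective algorithmic recipe via linear algebra.
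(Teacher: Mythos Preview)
Your proof is correct and follows essentially the same route as the paper: invoke Proposition~\ref{prop:equality} to identify $\ker A_f$ with $H^0(\I_Z\otimes E)$, conclude $Z\subseteq Z'$, and then read off the decompositions from the linear system in step~(4). Your treatment of the second part is in fact slightly more explicit than the paper's, which simply remarks that finitely many attempts with the linear system suffice.
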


\begin{proof}
Indeed let $f=\sum_{i=1}^r \mu_iv_i$ be a decomposition with minimal rank and set $Z = \{z_{1},\ldots,z_{r}\}$, with $z_{i}=[v_{i}]$.
By\propref{prop:equality} we have 
 $H^{0}(\I_{Z} \otimes E  ) = \ker A_{ f } $.  
It follows that $Z'$ is also the base locus of $H^{0}(I_{Z}\otimes E)$, hence $Z'\supseteq Z$. 
If $Z'$ is finite, then we can try the linear system in step (4), and with finitely many attempts we find a decomposition. 
 Uniqueness implies that the linear system given by $f=\sum_{i=1}^r c_iv_i$ has a unique solution, thus producing the unique minimal
 Waring decomposition.
\end{proof}
To further clarify, we restate the previous algorithm in a bit more detail in the case $E = \bw{a} Q(m)$, and in the presentation setting:

\begin{algor0}[Koszul Flattening Algorithm (General)]\label{koszulgeneral}
\hfill\par
\textbf{Input:} $ f  \in S^{d}V$, where $V$ has basis $\{x_{0},\ldots,x_{n}\}$.
\begin{enumerate}
\item{} Compute $\delta_{-} =  \left\lfloor\frac{d-1}{2}\right\rfloor$ and $\delta_{+} =  \left\lceil\frac{d-1}{2}\right\rceil$, and choose $a=\left\lceil \frac{n}{2}\right\rceil$, the Koszul flattening to use.
\item Construct the Koszul matrices $k_{p}$ for $p=n+1-a, n+2-a$
\item Construct the catalecticants $C_{ f_{i}}\colon S^{\delta_{+}}V \to S^{\delta_{-}}V$, of $ f_{i} = \frac{\partial f }{\partial x_{i}}$ for each $i$.
\item{} Construct the matrix $P_{ f } \colon \Hom(S^{\delta_{+}} V, \bw {a} V)\to \Hom(\bw {n-a} V, S^{\delta_{-}}V)$ by substituting $C_{ f_{i}}$ for $x_{i}$ in the matrix $k_{n+1-a}$.
\item{} Compute a basis $\{M_{1},\dots,M_{t}\}$ of $\ker P_{ f }$, and associate vectors of polynomials $\vec{w_{i}}$ to each $M_{i}$.  If $\ker P_{ f }$ is trivial, stop; this method fails.
\item{} Compute the eigenvectors $\{v_1,\ldots , v_s\}$ of a general element in $\ker P_{ f }$ as follows:
\begin{enumerate}
\item For each $\vec{w}_{i}$ compute the $\binom{n+1}{a-1} \times\binom{n+1}{a-1}$ minors of the block matrix 
$\left(\begin{array}{c|c}
k_{n+2-a}& \vec{w}
\end{array}\right)$ and store these minors in an ideal $J$.
\item Find the set $\{v_{1},\ldots,v_{s}\}$ of common eigenvectors of all $M$ in $\ker P_{ f }$ by computing the zero-set of $J$.  If $s$ is infinite, stop, the method fails. Otherwise continue.
\end{enumerate}
\item{} Solve the linear system $ f =\sum_{i=1}^sc_iv_i^d$ in the unknowns $c_i$.
\end{enumerate}
\par\textbf{Output:} The unique Waring decomposition of $ f $.
\end{algor0}

\begin{remark}\label{rmk:failure2}
Notice that in the middle of our algorithm (step (5)), the algorithm will fail if a certain matrix we construct has a trivial kernel.   This is an indication that the rank of the tensor is higher than the ranks which we can successfully decompose. We give precise bounds on the rank of the tensor (depending on the number of variables) for the success of this algorithm in ~\secref{sec:sufficient conditions}.  So the failure of the algorithm provides a lower bound for the rank of the input tensor.
\end{remark}

\subsection{Quintic and Pentahedral examples revisited}\label{sec:quintic pentahedral again}
Suppose $d=2m+1$.
In the quintic example we set $m=2$ and $E  =Q (2)$ on $\PP^2$.
The general section of $Q (2)$ vanishes on $7$ points.

In the pentahedral example we set $m=1$ and $E  =Q ^*(2)$ on $\PP^3$.
The general section of $Q ^*(2)$ vanishes on $5$ points.

An interesting remark is that asking that $h^0(Q (m))\ge nc_n(Q (m))$
(necessary condition to get that the zero locus of a section of $Q (m)$ is given by $c_n(Q (m))$\footnote{{We recall a few facts about the Chern class $c_{n}$ in ~\secref{sec:Chern}.}}
arbitrary points, {by counting parameters}), one checks that
the only solution for $m\ge 2$ to the diophantine equation resulting from\propref{prop:CS} is given by $n=m=2$ (the quintic example).

In the same way, asking that $h^0(Q ^*(m))\ge nc_n(Q ^*(m))$,
the only solution to the diophantine equation resulting from\propref{prop:wedge_eigen} for $m\ge 2$ is given by $n=3, m=2$ (pentahedral example).
In particular, our methods find all of the already known cases where the decomposition of a general symmetric tensor is unique, but do not provide any more uniqueness than was already known classically. This gives evidence that there may not be any other cases where uniqueness holds for the general tensor.

\section{Rank bounds for feasibility of our algorithms}\label{sec:sufficient conditions}
In this section we give bounds, depending on the number of variables and the rank of the tensor, for when our algorithm will succeed to produce the Waring decomposition of a given symmetric tensor.

Below in \thmref{thm:applicabilityP2} and \thmref{thm:applicabilityP3} we study bundles (over $\PP^{2}$ and $\PP^{3}$ respectively) twisted by an arbitrary integer $m$.  More generally we consider $\PP^{n}$ and apply these results to prove \thmref{thm:k2bound} and \thmref{thm:knbound} in the specific case that $d=2m+1$.

\begin{theorem}\label{thm:applicabilityP2}
Let $Q$ be the quotient bundle on $\PP^2$ and 
let $Z$ be a finite collection of $s$ general points in $\PP^2$.
\begin{enumerate}
\item[(i)] If $s\le \frac{1}{2}(m+3)(m+1)$ and $m\geq 0$, then $h^0(Q \otimes \I_Z(m))=(m+3)(m+1)-2s$;

this is equivalent to the fact that $H^0(Q (m))\rig{}H^0(Q _Z(m))$ is surjective.

\item[(ii)] If $s\le\frac{1}{2}(m^2+3m+4)$ ($m\ge 2)$ (or $s\le 3$ for $m=1$), then {the} 
base locus
of $H^0(Q \otimes \I_Z(m))$ is given by $Z$ itself.
\end{enumerate}
\end{theorem}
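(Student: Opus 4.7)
The plan is to tackle both parts via the Euler sequence on $\PP^2$, reducing the vector-bundle cohomology statements to cohomology of twisted ideal sheaves. Tensor
\[0\to\Osh(-1)\to V\otimes\Osh\to Q\to 0\]
with $\I_Z(m)$ to get the exact sequence
\[0\to\I_Z(m-1)\to V\otimes\I_Z(m)\to Q\otimes\I_Z(m)\to 0.\]
Using $H^2(\I_Z(m-1))=0$, the vanishing $H^1(Q\otimes\I_Z(m))=0$ reduces to surjectivity of the multiplication-type map
\[\mu_Z\colon H^1(\I_Z(m-1))\longrightarrow V\otimes H^1(\I_Z(m)).\]
Combined with $h^0(Q(m))=(m+1)(m+3)$ (read off from the untwisted Euler sequence) and $h^0(Q_Z(m))=2s$, such a vanishing is equivalent to the surjectivity of $H^0(Q(m))\to H^0(Q_Z(m))$ claimed in (i), and gives the stated $h^0(Q\otimes\I_Z(m))=(m+1)(m+3)-2s$.

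Next I would split by the size of $s$. For $s\le\binom{m+2}{2}$ the set $Z$ imposes independent conditions on curves of degree $m$ in general position, forcing $H^1(\I_Z(m))=0$; the target of $\mu_Z$ vanishes and surjectivity is automatic. The genuine content of (i) lies in the narrow band $\binom{m+2}{2}<s\le\tfrac12(m+1)(m+3)$, where dimension counts just barely allow $\mu_Z$ to be surjective. I would settle this by a Horace-style specialization: degenerate a carefully chosen subset of the points onto a line $L\cong\PP^1$, use the residual sequence, exploit the splitting $Q|_L\cong\Osh_L(1)\oplus\Osh_L$ (together with the elementary fact that points on $\PP^1$ impose independent conditions on any line bundle), and recurse on the residual scheme with a lower value of $m$. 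Induction closes the argument.

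For (ii), I would leverage (i). A point $p\notin Z$ lies in the base locus of $H^0(Q\otimes\I_Z(m))$ exactly when $h^0(Q\otimes\I_{Z\cup\{p\}}(m))=h^0(Q\otimes\I_Z(m))$. Whenever $s+1\le\tfrac12(m+1)(m+3)$, part (i) applied to $Z\cup\{p\}$ forces this value to drop by exactly $2$, contradicting $p\in\baseloc$. A direct comparison shows $s+1\le\tfrac12(m+1)(m+3)$ is implied by $s\le\tfrac12(m^2+3m+4)$ for every $m\ge 3$ and also for $m=1$, $s\le 3$. The single remaining case is $(m,s)=(2,7)$, where (i) gives $h^0(Q\otimes\I_Z(2))=1$; here I would invoke Chern classes: a nonzero section of $Q(m)$ has zero-scheme of length $c_2(Q(m))=m^2+m+1$, which for $m=2$ equals $7=|Z|$, so the zero-scheme of the unique section, containing the reduced scheme $Z$, must coincide with $Z$.

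I expect the main obstacle to be the surjectivity of $\mu_Z$ in the delicate range of (i). Dimensions are on our side, but the multiplication map is not automatically surjective, and the specialization onto a line has to be organized so that at each inductive step both the line-restriction and the residual problem fall into cases already handled; the boundary $s=\tfrac12(m+1)(m+3)$ (attained for odd $m$) is the tightest to check. Once (i) is secured, the argument for (ii) is an almost formal consequence, with only the isolated borderline $(m,s)=(2,7)$ requiring the Chern-class shortcut.
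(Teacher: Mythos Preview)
Your argument for (ii) has a genuine gap. You write that ``part (i) applied to $Z\cup\{p\}$ forces this value to drop by exactly $2$'', but (i) is a statement about a \emph{general} collection of points. Once $Z$ is fixed (however generic) and $p$ is an \emph{arbitrary} point of $\PP^2\setminus Z$, the configuration $Z\cup\{p\}$ is no longer general, and semicontinuity runs the wrong way: it only gives $h^0(Q\otimes\I_{Z\cup\{p\}}(m))\ge (m+1)(m+3)-2(s+1)$, which is perfectly compatible with $p$ lying in the base locus. So no contradiction is obtained. The paper does not try to reduce (ii) to (i) in this manner; it runs a parallel induction and, for each $z'\notin Z$, explicitly manufactures a section of $Q\otimes\I_Z(m)$ that is nonzero at $z'$: if $z'$ lies off the auxiliary curve one multiplies an inductively known section by the curve's equation, and if $z'$ lies on the curve one uses the surjectivity of restriction (proved in (i)) to lift a suitable section from the curve. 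Your Chern-class treatment of $(m,s)=(2,7)$ is correct and is essentially how the paper handles its own borderline cases.

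For (i) your Euler-sequence reduction to the surjectivity of $\mu_Z$ is valid and is a different route from the paper, which specializes points onto a smooth \emph{conic} $C$ (where $Q(m)|_C\cong\O_{\PP^1}(2m+1)^{\oplus2}$ is balanced) and inducts from $m-2$ to $m$. Be warned that your proposed specialization onto a \emph{line} is more delicate because $Q(m)|_L\cong\O_L(m+1)\oplus\O_L(m)$ is unbalanced: a direct induction on $h^0(Q\otimes\I_Z(m))$ from $m$ to $m-1$ by putting $\tilde s$ points on $L$ misses by one at the extremal value of $s$ (with $\tilde s=m+2$ the trace contribution is $0$ while the expected drop is $2\tilde s=2m+4$, overshooting $h^0(Q(m)|_L)=2m+3$). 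Working instead with $\mu_Z$ may avoid this, but the mechanism would have to be spelled out.
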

\begin{proof}
First note that, since $Z$ is {a general collection of points}, it is reduced. By semi-continuity, both statements can be proved  for a special 
collection $Z$. We prove the theorem by induction from $m-2$ to $m$,
so we have to distinguish the even and odd cases.
The starting cases {$m=0$} and {$m=1$} can be easily checked directly.
If $m=1$ then (i) says: if $s\le 4$, then $h^0(Q \otimes \I_Z(1))=8-2s$;
{Indeed  an element $A\in H^0(Q(1))$ corresponds to a traceless endomorphism of $V$
and by \lemref{lem:quotient} it vanishes exactly at the eigenvectors of $A$.
The space $E(v_1,\ldots v_s)$ of traceless endomorphisms which have $s$ general vectors $v_1,\ldots v_s$ as eigenvectors has dimension $8-2s$ for $s\le 4$, which proves the initial case
$m=1$ of (i). Moreover the common eigenvectors of the endomorphisms in  $E(v_1,\ldots v_s)$ for $s\le 3$
are  $v_1,\ldots v_s$ themselves, which proves the initial case
$m=1$ of (ii). The initial case $m=0$ of (i) is easier, while the initial case $m=2$ of (ii)
corresponds to the plane quintic example developed in the introduction.}
In this case $Z$ is given by seven points, $H^0(Q \otimes \I_Z(2))$ is one dimensional and
it is spanned by the unique section of $Q(2)$ vanishing on $Z$, here it plays an essential role
that $c_2(Q(2))=7$. This example may be checked also in {Macaulay2}.


%
%

First we prove (i). {Let $C\simeq\PP^1$ be a smooth conic. We recall that we have the splitting
$Q (m)_{|C}=\O_{|\PP^1}(2m+1)\oplus \O_{|\PP^1}(2m+1)$. Indeed, from all the possible splittings 
$\O_{|\PP^1}(\alpha)\oplus \O_{|\PP^1}(\beta)$ with $\alpha+\beta=4m+2$ the most balanced one is the only compatible
with the vanishing $H^0(Q(-1)_{|C})=0$ which follows from the exact sequence
\[0\rig{}\O(-3)\rig{}Q(-1)\rig{}Q(-1)_{|C}\rig{}0.\]}

We choose to specialize $\tilde s=\min(s,2m+2)$
points as general points
on {$C$}, and we call $Z'\subset\PP^1$ the subcollection obtained.
Let $Z''$ be given by the remaining points. Note that 
$\frac{1}{2}(m^2+4m+3)-\frac{1}{2}(m-2)^2+4(m-2)+3=2m+2$, so that the length of $Z''$ satisfies the
assumption of the inductive {step.}
There is a natural exact diagram
$$\begin{array}{ccccccccc}
&&0&&0&&0\\
&&\dow{}&&\dow{}&&\dow{}\\
0&\rig{}&\I_{Z''}(-2)&\rig{}&\I_{Z}&\rig{}&\I_{Z',C}&\rig{}&0\\
&&\dow{}&&\dow{}&&\dow{}\\
0&\rig{}&\O(-2)&\rig{}&\O&\rig{}&\O_C&\rig{}&0\\
&&\dow{}&&\dow{}&&\dow{}\\
0&\rig{}&\O_{Z''}&\rig{}&\O_{Z}&\rig{}&\O_{Z'}&\rig{}&0\\
&&\dow{}&&\dow{}&&\dow{}\\
&&0&&0&&0\\
\end{array}$$

Tensoring the first row by $Q(m)$ we get
 the exact sequence
\begin{equation}\label{eq:seqsplit}
0\rig{}Q (m-2)\otimes \I_{Z''}\rig{}Q (m)\otimes \I_Z
\rig{}\I_{Z'|\PP^1}(2m+1)\oplus \I_{Z'|\PP^1}(2m+1)\rig{}0
,\end{equation}
and the associated cohomology sequence
\[0\rig{}H^0(Q (m-2)\otimes \I_{Z''})\rig{}H^0(Q (m)\otimes \I_Z) 
\rig{}\CC^{4m+4-2\tilde s}.\]
Hence
\[h^0(Q (m)\otimes \I_Z)\le h^0(Q (m-2)\otimes \I_{Z''})+4m+4-2\tilde s,\]
and by the inductive assumption we get the inequality
\[h^0(Q \otimes \I_Z(m))\le m^2+4m+3-2s.\]
By considering the sequence
\[0\rig{}Q \otimes \I_Z(m)\rig{}Q (m)\rig{}\C^2\otimes\O_Z\rig{}0,\]
we get the opposite inequality, then the result. 

The proof of (ii) follows the same inductive step and we specialize the points in a similar manner.
We choose to specialize now only $\min(s,2m+1)$
points as general points
on a smooth conic $C\simeq\PP^1$. {An anonymous referee pointed out that the case $m=3$, $s=11$ cannot be proved by induction
because the case $m=1$ our statement holds only for $s\le 3$. { Indeed the case $m=3$, $s=11$
has to be proved separately (by direct computation). Our computation in Macaulay2 that accomplishes this can be checked and repeated using the file ``General Kappa method.m2'' (see  ~\secref{sec:M2})}
and setting the parameters $s=11$, $d=7$, $n=3$. } 

For any $z'\notin Z$
we have to prove that there is a section $\sigma\in H^0(Q \otimes \I_Z(m))$
such that $\sigma(z')\neq 0$. Note that {by the inductive assumption} there is $\sigma'\in H^0(Q \otimes \I_{Z''}(m-2))$
such that $\sigma'(z')\neq 0$. Let $f$ be the equation of $C$. If $z'\notin C$ then $\sigma=\sigma'f$ works.
The proof of (i) {(specifically the sequence \eqref{eq:seqsplit} specialized to fewer points)} shows that 
$$H^0(Q \otimes \I_{Z}(m))\rig{}H^0(Q \otimes \I_{Z',C}(m))$$
is surjective.
If $z'\in C$ there is a section $\sigma''\in H^0(Q \otimes \I_{Z',C}(m))$ which does not vanish in $z'$
(here we need that $\text{length }Z'\le 2m+1$) and there is 
a section in $H^0(Q \otimes \I_{Z}(m))$ which restricts to $\sigma''$ and then does not vanish in $z'$, as we wanted.
\end{proof}

\begin{remark}
Note that when $m$ is even ($m\ge 2$) and $2s=m^2+4m+2$ then $H^0(Q \otimes \I_{Z}(m))$ is generated by one section,
and the number of points in the base locus is given by $c_2(Q (m))=1+m+m^2\ge\frac{1}{2}(m^2+4m+2)$ and the equality holds only for $m=2$.
The case $m=2$ is indeed special, and it is the case of $S^5\CC^3$ addressed in the introduction.
The corresponding tensor decomposition of $S^5\C^3$ (considered here in the introduction) was noted in \cite{LO11}.\end{remark}

The following theorem is a generalization of \thmref{thm:applicabilityP2} from $\PP^2$ to $\PP^3$.  
In principle it is possible to obtain similar theoretical bounds for $\bw aQ $ on $\PP^n$, but it does not seem easy to find sharp bounds
like those in \thmref{thm:applicabilityP2} on $\PP^2$. See the \rmkref{rmk:rationalquartic} which shows that this
topic can be tricky.  We will continue to use $Q$ for the quotient bundle on any $\PP^{3}$ (and later we will use the same $Q$ for the quotient bundle on $\PP^{n}$), and when we restrict this quotient bundle to a smaller space we will indicate that as $Q_{|\PP^{2}}$ for example -- we hope the usage will be clear in context. Recall, from the Euler sequence, that $h^0(\PP^3,Q(m))=\frac 12(m+4)(m+2)(m+1)$ and $h^0(\PP^3,Q^*(m+1))=\frac 12(m+4)(m+3)(m+1)$ for $m\ge 0$.

In the proof of \thmref{thm:k2bound} there was extra symmetry (that the bundles $E$ and $E^{*}\otimes L$ happened to be isomorphic in the case $E=Q(m)$ and $L =\O(2m+1)$). Without this symmetry, we will need two analogous pairs of statements (stated next) in order to apply \propref{prop:baseloc} for the proof of \thmref{thm:knbound}.

\begin{theorem}\label{thm:applicabilityP3}
Let $Q $ be the quotient bundle on $\PP^3$ and 
let $Z$ be a finite collection of $s$ general points in $\PP^3$.\begin{enumerate}
\item[(i)] If 
$s\le \left\lfloor\frac{1}{3}(h^0(Q (m))-m+2)\right\rfloor
= \left\lfloor\frac{1}{3}\left(\frac 12(m+4)(m+2)(m+1)-m+2\right)\right\rfloor$ and $m\geq 0$,
then
$h^0(Q \otimes \I_Z(m))=\frac 12(m+4)(m+2)(m+1)-3s$.
This is equivalent to the fact that $H^0(Q (m))\rig{}H^0(Q _Z(m))$ is surjective.

\item[(ii)] If $s\le\frac{1}{3}\left(\frac 12(m+4)(m+2)(m+1)-\frac{m^2}{2}-\frac{3m}{2}+5\right)$ ($m\ge 2)$
 (or $s\le 4 $ for $m=1$), then the base locus
of $H^0(Q \otimes \I_Z(m))$ is given by $Z$ itself.

\item[(iii)] If 
$s\le \left\lfloor\frac{1}{3}(h^0(Q^* (m+1))-m+1)\right\rfloor
= \left\lfloor\frac{1}{3}\left(\frac 12(m+4)(m+3)(m+1)-m-2\right)\right\rfloor$ and $m\geq 0$,
then
$h^0(Q^* \otimes \I_Z(m+1))=\frac 12(m+4)(m+3)(m+1)-3s$.
This is equivalent to the fact that $H^0(Q^* (m+1))\rig{}H^0(Q^* _Z(m+1))$ is surjective.

\item[(iv)] If $s\le\frac{1}{3}\left(\frac 12(m+4)(m+3)(m+1)-\frac{m^2}{2}-\frac{m}{2}-8\right)$ ($m\ge 2$)
 (or $s\le 5$ for $m=1$), then the base locus
of $H^0(Q^* \otimes \I_Z(m+1))$ is given by $Z$ itself.
\end{enumerate}
\end{theorem}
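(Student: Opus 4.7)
The plan is to mirror the proof of \thmref{thm:applicabilityP2}, using induction on $m$ together with the Horace specialization method. The main geometric ingredient is the splitting $Q_{|H} \cong Q_{\PP^2} \oplus \O_H$ for any hyperplane $H \cong \PP^2 \subset \PP^3$: there is a natural surjection $Q_{|H} \to \O_H$ coming from the linear form defining $H$, and the resulting extension $0 \to Q_{\PP^2} \to Q_{|H} \to \O_H \to 0$ splits because $\mathrm{Ext}^1(\O_H, Q_{\PP^2}) = H^1(Q_{\PP^2}) = 0$. Dualizing gives $Q^*_{|H} \cong Q^*_{\PP^2} \oplus \O_H$, so twisting yields $Q(m)_{|H} = Q_{\PP^2}(m) \oplus \O_H(m)$ and $Q^*(m+1)_{|H} = Q^*_{\PP^2}(m+1) \oplus \O_H(m+1)$. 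Moreover on $\PP^2$ we have $\det Q_{\PP^2} = \O(1)$, hence $Q^*_{\PP^2} = Q_{\PP^2}(-1)$ and $Q^*_{\PP^2}(m+1) = Q_{\PP^2}(m)$, so \thmref{thm:applicabilityP2} applies to all the relevant restricted bundles.

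For each of the four parts we induct on $m$, with base cases at small $m$ verified directly (or by a short Macaulay2 check, as in the proof of \thmref{thm:applicabilityP2}). For the inductive step we write $Z = Z' \sqcup Z''$ where $Z'$ is specialized to the plane $H$ and $Z''$ is general off $H$, and we use the Horace exact sequence
\[0 \to Q(m-1) \otimes \I_{Z''} \to Q(m) \otimes \I_Z \to Q(m)_{|H} \otimes \I_{Z'|H} \to 0\]
(and its analogue for $Q^*$). For (i), the inductive hypothesis forces $H^1(Q(m-1) \otimes \I_{Z''}) = 0$ as soon as $|Z''|$ stays within the $(m-1)$-bound, so
\[h^0(Q(m) \otimes \I_Z) = h^0(Q(m-1) \otimes \I_{Z''}) + h^0(Q_{\PP^2}(m) \otimes \I_{Z'|H}) + h^0(\O_H(m) \otimes \I_{Z'|H}).\]
The three summands are controlled respectively by the inductive hypothesis on $\PP^3$, by \thmref{thm:applicabilityP2}(i), and by the elementary fact that generic points impose independent conditions on plane curves. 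Summing gives the asserted value $h^0(Q(m)) - 3s$.

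For (ii), given $z' \notin Z$, we produce $\sigma \in H^0(Q(m) \otimes \I_Z)$ with $\sigma(z') \neq 0$. If $z' \notin H$, multiply a section of $H^0(Q(m-1) \otimes \I_{Z''})$ supplied by the inductive hypothesis by the linear form defining $H$. If $z' \in H$, use the surjectivity from (i) to lift a section of $H^0(Q(m)_{|H} \otimes \I_{Z'|H})$ that, by \thmref{thm:applicabilityP2}(ii) applied to $Q_{\PP^2}(m)$ together with the analogous elementary statement for $\O_H(m)$, can be chosen not to vanish at $z'$. Parts (iii) and (iv) are handled identically with $Q^*(m+1)$ in place of $Q(m)$, using the Horace sequence
\[0 \to Q^*(m) \otimes \I_{Z''} \to Q^*(m+1) \otimes \I_Z \to Q^*(m+1)_{|H} \otimes \I_{Z'|H} \to 0\]
and the splitting identifications above to reduce the restricted bundles to objects to which \thmref{thm:applicabilityP2} applies directly.

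The hardest part will be the numerical bookkeeping: verifying that for every $s$ up to the asserted bound one can partition $s = |Z'| + |Z''|$ so that both pieces lie within the allowed ranges for the respective restriction statements, and matching the resulting dimension count to the formula $\tfrac{1}{2}(m+4)(m+2)(m+1) - 3s$ (and its $Q^*$ counterpart). A few small-$m$ boundary cases, analogous to the $m=3$, $s=11$ exception highlighted in the proof of \thmref{thm:applicabilityP2}, will likely need direct computer verification.
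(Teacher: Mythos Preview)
Your proposal is essentially the same as the paper's proof: induction on $m$ via the Horace specialization onto a hyperplane $H\cong\PP^2$, using the splitting $Q(m)_{|H}=Q_{\PP^2}(m)\oplus\O_H(m)$ (resp.\ $Q^*(m+1)_{|H}=Q_{\PP^2}(m)\oplus\O_H(m+1)$), the same exact sequence, and the same two-case argument for (ii)/(iv) depending on whether $z'\in H$. The paper organizes the numerical bookkeeping by naming functions $f(m),g(m)$ and checking $f(m)-f(m-1)=g(m)$, and obtains the $h^0$ equality in (i)/(iii) by combining an upper bound from the Horace sequence with a lower bound from $0\to Q\otimes\I_Z(m)\to Q(m)\to\C^3\otimes\O_Z\to 0$, rather than invoking $H^1$-vanishing as you do; both arguments are equivalent here.
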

\begin{proof}
By semi-continuity, each statement can be proved  for a special 
collection $Z$. We prove the theorem by induction from $m-1$ to $m$.
The starting cases $m=1$ can be easily checked directly.
Note that $Q (m)_{|\PP^2}=Q_{\PP^2}(m)\oplus\O_{\PP^2}(m)$.
We choose to specialize $\tilde s=\min\left(s,\left\lfloor\frac{1}{3}(h^0(Q (m)_{|\PP^2})\right\rfloor\right)$
points as general points on a hyperplane $\PP^2$
and we call $Z'\subset\PP^2$ the subcollection obtained.
Let $Z''$ be given by the remaining points. Let $g(m)$ and $f(m)$ respectively denote the numbers
\begin{equation}g(m)=\left\lfloor\frac{1}{3}(h^0(Q (m)_{|\PP^2})\right\rfloor\end{equation} and
\begin{equation}f(m)=\left\lfloor\frac{1}{3}\left(\frac 12(m+4)(m+2)(m+1)-m+2\right)\right\rfloor.\end{equation}
It is straightforward to check that $f(m)-f(m-1)=g(m)$,  so that the length of $Z''$ satisfies the
assumption of the inductive step.

Then we have the exact sequence
\[0\rig{}Q (m-1)\otimes \I_{Z''}\rig{}Q (m)\otimes \I_Z\rig{}\I_{Z'|\PP^2}\otimes Q _{|\PP^2}(m)\rig{}0,\]
and the associated cohomology sequence
\[0\rig{}H^0(Q (m-1)\otimes \I_{Z''})\rig{}H^0(Q (m)\otimes \I_Z)\rig{}\CC^{h^0(Q (m)_{|\PP^2})-3s}.\]  
Hence
\[h^0(Q (m)\otimes \I_Z)\le h^0(Q (m-1)\otimes \I_{Z''})+h^0(Q (m)_{|\PP^2})-3s,\]
and by the inductive assumption we get the inequality
\[h^0(Q \otimes \I_Z(m))\le (m+4)(m+2)(m+1)-3s.\]
By considering the sequence
$$0\rig{}Q \otimes \I_Z(m)\rig{}Q (m)\rig{}\C^3\otimes\O_Z\rig{}0,$$
we get the opposite inequality, then the result. 

The proof of (ii) follows the same inductive step and we specialize the points in a similar manner.
We choose to specialize now only $\min(s,\left\lfloor\frac{1}{3}\left((m^2+3m+4)+{{m+2}\choose 2}\right)\right\rfloor)$
points as general points
on $\PP^2$.
Now let $g'(m)$ and $f'(m)$ respectively denote the numbers
\begin{equation}g'(m)=\left\lfloor\frac{1}{3}\left((m^2+3m+4)+{{m+2}\choose 2}\right)\right\rfloor\end{equation}
 and
\begin{equation}f'(m)=\frac{1}{3}\left(\frac 12(m+4)(m+2)(m+1)-\frac{m^2}{2}-\frac{3m}{2}+5\right).\end{equation}
Also in this case it is possible to check that $f'(m)-f'(m-1)=g'(m)$. 

For any $z'\notin Z$
we have to prove that there is a section $s\in H^0(Q \otimes \I_Z(m))$
such that $s(z')\neq 0$. Note that there is $s'\in H^0(Q \otimes \I_{Z''}(m-1))$
such that $s'(z')\neq 0$. Let $h$ be the equation of $\PP^2$. If $z'\notin C$ then $s=s'h$ works.
The proof of (i) shows that 
$$H^0(Q \otimes \I_{Z}(m))\rig{}H^0(Q \otimes \I_{Z',\PP^2}(m))$$
is surjective.
If $z'\in \PP^2$ there is a section $s''\in H^0(Q \otimes \I_{Z',\PP^2}(m))$ which does not vanish in $z'$ (here we need (ii) of \thmref{thm:applicabilityP2}) and there is a section in $H^0(Q \otimes \I_{Z}(m))$ which restricts to $s''$ and then does not vanish in $z'$, as we wanted.
{The proof of (iii) is very similar to the proof of (i) because $Q^*(m+1)$ restricts on every plane to 
$Q_{\PP^2}(m)\oplus\O_{\PP^2}(m+1)$ . We set }
\begin{equation}g(m)=\left\lfloor\frac{1}{3}(h^0(Q^* (m+1)_{|\PP^2})\right\rfloor\end{equation} and
\begin{equation}f(m)=\left\lfloor\frac{1}{3}\left(\frac 12(m+4)(m+3)(m+1)-m-2\right)\right\rfloor.\end{equation}
We check that $f(m)-f(m-1)=g(m)$ and the initial case holds:
$f(0)=1$. Then we proceed exactly like in (i).

The proof of (iv) is very similar to the proof of (ii). So let 
\begin{equation}g'(m)=\left\lfloor\frac{1}{3}\left((m^2+3m+4)+{{m+3}\choose 2}\right)\right\rfloor\end{equation}
and let $f'(m) =  \lfloor \frac{1}{3}\left(\frac 12(m+4)(m+3)(m+1)-\frac{m^2}{2}-\frac{m}{2}-8\right)\rfloor $. 
Then one checks that $f'(m)-f'(m-1)=g'(m)$ and the initial case holds: $f'(2)=11$.

\end{proof}

The following theorem generalizes \thmref{thm:k2bound} to the case $n\ge 3$. Again, when $d$ is even, use  Algorithm~\ref{catalg}.
{In the following theorem we set $E=\wedge^{n-1}Q(m)=Q^*(m+1)$. Note that the 
pentahedral example of \ref{sec:quintic pentahedral again} corresponds to $n=3$ and $m=1$.
So we have the maps $A_f\colon H^0(Q^*(m+1))\to H^0(Q(m))^*$
and $P_f\colon \Hom(S^mV,\wedge^{n-1}V)\to \Hom(V,S^mV)$.
}

\begin{theorem}\label{thm:knbound} Suppose $n\ge 3$ and set $d=2m+1$. Let $f=\sum_{i=1}^rv_i^{d}$ be a form in $S^{d}V$ of rank $r$
{such that $f$ is general among the forms in $S^{d}V$ of rank $r$,} 
{let $z_i=[v_i]\in\PP(V)$ 
be the corresponding points and let $Z=\{z_1,\ldots, z_r\}$.}

\begin{enumerate}
\item {Let $Z'$ be the set of common eigenvectors (up to scalars) of $\ker P_f$.} If $n$ is even and $r\le {{m+n}\choose n}$, 
 then $Z'$ agrees with $Z$.
Moreover Algorithm~\ref{koszulgeneral} produces the unique Waring decomposition of $f$.
\item If $n$ is odd and $r\le {{m+n}\choose n}$ {let $Z'$ be the set of common eigenvectors (up to scalars)} of $\ker P_f$
and  $\left(\im P_{f}\right)^{\perp}$. Then $Z'=Z$ and Algorithm~\ref{koszulgeneral}, with this modification, produces the unique Waring decomposition of $f$.
\item {If $n=3$ and $r \leq \frac{1}{3}\left(\frac{1}{2}(m+4)(m+3)(m+1)-\frac{m^2}{2}-\frac{m}{2}-8\right)$ let $Z'$ be the set of common eigenvectors (up to scalars) of $\ker P_f$.} Then Algorithm~\ref{koszulgeneral} (with {$a=2$}) produces the unique Waring decomposition of $f$. 
\end{enumerate}
\end{theorem}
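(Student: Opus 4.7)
The plan is to apply \propref{prop:baseloc} (together with \propref{prop:equality} and \thmref{thm:metabundle} as needed) with the bundle $E = \bw{n-1}Q(m) = Q^*(m+1)$ on $\PP^n=\PP V$ and $L=\O(2m+1)$, so that $E^*\otimes L = Q(m)$ and the map $A_f\colon H^0(Q^*(m+1))\to H^0(Q(m))^*$ is represented by the Koszul flattening $P_f$ via the presentation of ~\secref{sec:presentations}; thus $\ker A_f$ and $\ker P_f$, and analogously $(\im A_f)^{\perp}$ and $(\im P_f)^{\perp}$, share the same base locus, interpreted as the common eigenvectors of the tensors in question. Under each of the stated bounds the rank $r$ lies strictly below the generic rank and outside the (weakly) defective list, so \thmref{thm:uniqueness} guarantees that reconstructing $Z$ from $f$ pins down the unique Waring decomposition.

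For Part (3) with $n=3$, the hypothesis is exactly the bound of \thmref{thm:applicabilityP3}(iv); one verifies numerically that it also implies the bound of \thmref{thm:applicabilityP3}(iii), giving surjectivity of $H^0(Q^*(m+1))\to H^0(Q^*_{|Z}(m+1))$. This ensures the rank condition $\rank A_f = r\cdot \rank E$, so \propref{prop:equality} yields $\ker A_f = H^0(\I_Z\otimes Q^*(m+1))$. Part (iv) of \thmref{thm:applicabilityP3} then identifies the base locus of $H^0(\I_Z\otimes Q^*(m+1))$ with $Z$ itself, so $Z'=Z$ and the final step of Algorithm~\ref{koszulgeneral} (with $a=2$) returns the unique decomposition.

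For Parts (1) and (2), the argument requires a generalization of \thmref{thm:applicabilityP3} to $\PP^n$ under the cruder bound $r\le {{m+n}\choose n}$: surjectivity of the evaluation maps $H^0(Q(m))\to H^0(Q_{|Z}(m))$ and $H^0(Q^*(m+1))\to H^0(Q^*_{|Z}(m+1))$, together with the base-locus statement $\baseloc(H^0(\I_Z\otimes Q^*(m+1)))=Z$ in the even case, and that the intersection of this base locus with $\baseloc(H^0(\I_Z\otimes Q(m)))$ equals $Z$ in the odd case. The proof would follow the inductive pattern of \thmref{thm:applicabilityP3}: specialize a sub-collection $Z'\subset Z$ to a hyperplane $\PP^{n-1}\subset\PP^n$, use the splittings $Q(m)_{|\PP^{n-1}}=Q_{\PP^{n-1}}(m)\oplus\O_{\PP^{n-1}}(m)$ and $Q^*(m+1)_{|\PP^{n-1}}=Q^*_{\PP^{n-1}}(m+1)\oplus\O_{\PP^{n-1}}(m+1)$, form the exact diagram relating $\I_{Z''}(-1)$, $\I_Z$, $\I_{Z'\mid\PP^{n-1}}$, pass to cohomology, and induct on $n$ with \thmref{thm:applicabilityP2} and \thmref{thm:applicabilityP3} as base cases. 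In the odd case the base locus of $\ker A_f$ alone may be strictly larger than $Z$, so one then applies \thmref{thm:metabundle}, additionally checking the surjectivity of the multiplication $H^0(\I_Z\otimes E)\otimes H^0(\I_Z\otimes E^*\otimes L)\to H^0(\I_Z^2\otimes L)$, to recover $Z$ as the intersection of the two base loci.

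The hard part is the higher-dimensional base-locus statement. The geometric case analysis of the test point $z'$ on or off the chosen hyperplane generalizes in principle, but matching the arithmetic identity $f(m,n)-f(m-1,n)=g(m,n)$ underpinning the induction to the sharp bound ${{m+n}\choose n}$ will require delicate bookkeeping, and isolated low-$m$ cases may need to be verified directly as in the $m=3$, $s=11$ case of \thmref{thm:applicabilityP3}(ii). The parity distinction between Parts (1) and (2) ultimately traces back to the asymmetry $h^0(E)\ne h^0(E^*\otimes L)$ for $n>2$: in even dimensions the $\ker A_f$ side alone has small enough base locus to coincide with $Z$, while in odd dimensions one side of the duality is insufficient and the intersection argument is forced.
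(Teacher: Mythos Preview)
Your treatment of Part~(3) is correct and matches the paper: apply \propref{prop:baseloc}/\propref{prop:equality} with $E=Q^*(m+1)$, invoke \thmref{thm:applicabilityP3}(iii) for the rank equality and (iv) for the base-locus identification, then finish via \thmref{thm:uniqueness}.

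For Parts~(1) and~(2), however, the paper takes a genuinely different route from yours. You propose to push the inductive hyperplane-specialization argument of \thmref{thm:applicabilityP3} up to $\PP^n$ in order to obtain surjectivity of the evaluation maps and the base-locus statement directly. The paper does \emph{not} do this; in fact, the paragraph preceding \thmref{thm:applicabilityP3} explicitly says that obtaining such sharp bounds for $\bw{a}Q$ on $\PP^n$ ``does not seem easy.'' Instead, the paper imports two external ingredients: first, the surjectivity of the multiplication map
\[
H^0(\bw{a}Q\otimes\I_Z(m))\otimes H^0(\bw{n-a}Q\otimes\I_Z(m))\longrightarrow H^0(\I_{Z^2}(2m+1))
\]
is taken from \cite[\S7, Theorem~1.2.3]{LO11}; second, the rank equality $\rank P_f=\binom{n}{a}r$ is deduced not from evaluation-surjectivity but from a normal-bundle/dimension argument, using Alexander--Hirschowitz (\thmref{thm:AH}) to see that $H^0(\I_{Z^2}(2m+1))$ has the expected codimension, hence the minor locus is smooth of expected dimension at $[f]$. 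With both hypotheses of \thmref{thm:metabundle} in hand, the conclusion $Z=Z'$ follows for both parities simultaneously. Your approach, if it can be completed, would be more self-contained; the paper's approach bypasses the delicate induction entirely at the cost of citing \cite{LO11}.

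Two further points. First, for Parts~(1)--(2) the relevant bundle is $E=\bw{a}Q(m)$ with $a=\lceil n/2\rceil$ (as in Algorithm~\ref{koszulgeneral}), not $a=n-1$; the preamble's $E=Q^*(m+1)$ really only describes the $n=3$ case. Second, your explanation of the even/odd split is not quite the paper's: the distinction is that for $n$ even one has $a=n-a$, so $P_f$ is (skew-)symmetric and $\ker P_f=(\im P_f)^{\perp}$ automatically, whereas for $n$ odd these two spaces differ and one must intersect both base loci, exactly as \thmref{thm:metabundle} prescribes.
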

\begin{proof}
First notice that by \thmref{thm:uniqueness}, we know that in these cases we have a unique decomposition.  

Now we prove (1). By \cite[\S 7]{LO11} and \cite[Theorem~1.2.3]{LO11} we have that the natural map
\begin{equation}\label{fromLO11}
H^0(\bw{a}Q\otimes\I_Z(m))\otimes H^0(\bw{n-a}Q\otimes\I_Z(m))\rig{}H^0(\I_{Z^2}(2m+1))
\end{equation}
{is surjective.} {Moreover, 
the dimension of the image of the map in (\ref{fromLO11}) is
equal to the rank of the normal bundle
 of the variety {cut out} by the minors of size ${n\choose a}r+1$ of $P_f$ (where $f$ is considered here as a polynomial with 
variable entries).
 By the Alexander-Hirschowitz Theorem \ref{thm:AH} the 
space $H^0(\I_{Z^2}(2m+1))$ has the expected codimension $r(n+1)$ in the space $H^0(\O(2m+1))$,
hence the rank of the normal bundle is the expected one ${{n+2m+1}\choose n}-r(n+1)$
and it follows that the dimension of the scheme cut out by the minors is the expected one $r(n+1)-1$ at $[f]$.
In particular this scheme has a reduced irreducible component containing $[f]$
which is the $r$-secant variety to the $d$-Veronese embedding of $\PP^n$.
It follows that the scheme cut out by minors is smooth of the expected dimension at $f$, hence
we have the equality $\textrm{rk}(P_f)={n\choose a}r=\textrm{rk}(\bw{a}Q)\cdot \textrm{rk}(f)$,
otherwise, if $\textrm{rk}(P_f)$ is smaller, then the variety cut out by minors should have been
singular at $[f]$.
 Note that in \cite[Theorem~1.2.3]{LO11} it was fixed the value
$a=\lfloor{n/2}\rfloor$, but we may apply it as well
in the case $a=\lceil{n/2}\rceil$ because we get just the transpose map. }
If $n$ is even, then the symmetry of $P_f$ guarantees that $\ker P_f=\left(\im P_{f}\right)^{\perp}$. By \thmref{thm:metabundle}, with $E=\bw{a}Q$ and $L=\O(2m+1)$, we get that $Z=Z'$ {(the assumptions of \thmref{thm:metabundle} are satisfied by \thmref{thm:uniqueness}).}
The proof of (2) is analogous.

The proof of (3) follows the same lines of the proof of \thmref{thm:k2bound},
but we use \thmref{thm:applicabilityP3}
at the place of \thmref{thm:applicabilityP2}.
\end{proof}

\begin{remark}\label{rmk:rationalquartic} An interesting case is $n=4$, $d=3$, $r=7$, a defective case addressed in \cite{Ott2}. Set $V=\CC^5$, pick a general $f=\sum_{i=1}^7v_i^3$ 
with $Z=\{[v_1],\ldots ,[v_7]\}$
and construct $P_f\colon \Hom(V,\bw2 V)\to \Hom(\bw2 V, V)$. The locus $Z'$ of common eigenvectors
of $\ker P_f$ is the unique rational quartic curve in $\PP{{}}V$ passing through $Z$.
Note that a general element in $Hom(V,\bw2 V)$ has no eigenvectors, in agreement with \thmref{thm:counting}.
In more geometric terms, this means that all sections of $\bw2Q(1)$ vanishing on $Z$  also vanish on $Z'$.
This follows easily by the construction and by Terracini Lemma.
So, as a byproduct of Algorithm~\ref{koszulgeneral}, we have found an algorithm to write down the unique rational quartic
curve through $7$ general points in $\PP^4$. According to Ranestad-Schreyer (that we quote) the uniqueness is by Castelenuovo, (see the proof of Prop. 5.2 in their paper).
\end{remark}
\section{Using Chern classes to count the number of eigenvectors of a general tensor}\label{sec:Chern}

To a vector bundle $E$ on an algebraic variety $X$ are associated its Chern classes $c_i(E  )\in \mathcal{A}^i(X)$, where $\mathcal{A}(X)$ is the Chow ring of $X$.  The reader unfamiliar with Chern classes may wish to consult \cite{Hartshorne_tome} or \cite{OSS}.
For vector bundles on $\PP V$, we have
$\mathcal{A}^i(\PP V)=\Z$ and the Chern classes can be considered
as integers. 

The basic principle that we will use is the following.
If a vector bundle $E$ of rank $r$ on a variety $X$ has a section
vanishing on $Z$, and the codimension of $Z$ is equal to $r$,
then the class of $[Z]\in \mathcal{A}^r(X)$ 
is computed by $[Z]=c_r(E  )$.

The following proposition is a particular case of a more general result,
proved recently by Cartwright and Sturmfels with toric techniques \cite{CartwrightSturmfels2011}, who proved a conjecture stated in \cite{MR2296920},
where\propref{prop:CS} was proved in the case $m$ odd.

\begin{prop0}\label{prop:CS}(Cartwright-Sturmfels)
The number of eigenvectors (counted with multiplicity)
of a general  $M\in\Hom (S^mV,V)$ is equal to 
$$\frac{m^{n+1}-1}{m-1}.$$
\end{prop0}
\begin{proof}
{ By \lemref{lem:quotient}, } the number of eigenvectors of a general $M\in\Hom (S^mV,V)$ is equal to $c_n(Q(m))$.
In order to compute this number we use the formula {{(see \cite{OSS} section 1.2)}}
\[c_n(E  \otimes L)=\sum_{i=0}^n{{r-i}\choose{n-i}}c_i(E  )c_1(L)^{n-i}\]
for a vector bundle $E$ and a line bundle $L$.

In our case we have
\[c_n(Q(m))=\sum_{i=0}^nc_i(Q)c_1(\O(m))^{n-i}=
\sum_{i=0}^nm^{n-i}=\frac{1-m^{n+1}}{1-m},\]
where we have used the well known fact that $c_i(Q)=1$, which follows immediately from the exact sequence
\[0\rig{}\O(-1)\rig{}\O\otimes V\rig{}Q\rig{}0.\qedhere\]
\end{proof}

\begin{prop0}\label{prop:wedge_eigen}
 The number of eigenvectors (counted with multiplicity) of a general  $M\in\Hom (S^mV,\bw {n-1}V)$ is equal to 
\[
\frac{(m+1)^{n+1}+(-1)^n}{m+2}
.\]
\end{prop0}
\begin{proof}
{ By  \lemref{lem:quotient}, }the number of eigenvectors of a general $M\in\Hom (S^mV,\bw {n-1}V)$ is equal to $c_{n}(\bw{n-1}Q(m))=c_n(Q ^*(m+1))$, which can be computed in a similar way by noting that $c_{i}(Q^{*}) = (-1)^{i}$. To follow the previous proof, we set $m'= m+1$.
\begin{multline*}
c_n(Q^{*}(m'))=\sum_{i=0}^nc_i(Q^{*})c_1(\O(m'))^{n-i}
=
\sum_{i=0}^n (-1)^{i} (m')^{n-i}\\
= 
(-1)^{n}\sum_{i=0}^n (- m')^{n-i}=
(-1)^n\frac{1-(-m')^{n+1}}{1+m'}=
\frac{(m')^{n+1}+(-1)^n}{m'+1}.
\qedhere\end{multline*}
\end{proof}

\section{Macaulay 2 Implementation}\label{sec:M2}
The tensor decomposition algorithms in this article could be implemented in a variety of computational algebra or computational linear algebra packages.  We chose to implement our algorithms in Macaulay 2 because we found many of the procedures we would need were already implemented. Our algorithms may be easily adapted to other languages,  depending on the desired features.  Our code may be found online with the ancillary materials accompanying the arXiv version of our paper or by contacting either author.  {In this section we have tried to use the verbatim text style to indicate Macaulay2 input.}

\subsection{The catalecticant algorithm implementation}\label{sec:cat method}
Our first example is an implementation of the catalecticant algorithm (Algorithm~\ref{catalg}).  For this example we work with degree $d$ polynomials on $n+1$ variables.  This is the file ``cat\_method.m2''.  Our experiments show that working over a prime characteristic base field, or also the rational numbers, for relatively small $d$ and $n$, the catalecticant method quickly computes the decomposition within the range of \thmref{thm:IKbound}, and sometimes when the degree is low, even succeeds for slightly higher ranks than predicted by the bound.
For this implementation the user can change the values of degree \verb#d# and projective dimension \verb#n# as well as the rank \verb#s# of the test polynomial at the beginning of the file. The ring \verb#R# can be either over the rationals or over a prime characteristic field.

A polynomial \verb#ff# over ground field \verb#KK = ZZ/p# or \verb#QQ# is constructed as the sum of \verb#s# \verb#d#$^{th}$ powers by the following.
\begin{verbatim}
R  = KK[x_0..x_n]
ff=sum(s,i->(random(1,R))^d)
\end{verbatim}

Next, we construct a map that computes the ``most square'' catalecticant matrix associated to a given input polynomial. We can do this conveniently by using ceiling and floor commands to define the degrees, the basis command to define vectors of appropriate sizes for what would be the labels of the rows and columns of the matrix and the \verb#diff# command to construct the matrix.
\begin{verbatim}
af = floor(d/2)
ac = ceiling(d/2)
xaf = basis(af,R)
xac = basis(ac,R)
catalecticant=f->diff(transpose xac,diff(xaf,f)) 
\end{verbatim}

The next step is to find the base locus of the kernel of the catalecticant matrix.  
First we compute the generators of the kernel, then we convert these integer vectors to an ideal of polynomials using the basis of the base space, and finally we decompose the radical of the ideal by first computing the saturation (this sometimes results in a speed-up, and is justified because the saturation has the same {scheme}-theoretic structure, and this is all that concerns us with this application). 
\begin{verbatim}
K =gens kernel catalecticant ff
I = ideal(xaf*K)
L = decompose saturate I
\end{verbatim}
The list \verb#L# contains the apolars of the linear forms which will (up to scale) be used to write the decomposition.

Next we construct a ring \verb#S# by appending constants \verb#c_i# whose number is the number of points in the base locus of the kernel. 
\begin{verbatim}
S  = KK[x_0..x_n,c_0..c_(length L -1)]
\end{verbatim}
 We construct the polynomial \verb#fc# $= \sum_{v_{i}\in \hbox{\verb#L#}} c_{i}v_{i}^{*}$, where the $v_{i}^{*}$ are the apolar forms to the $v_{i}$.  We accomplish the swap between a form its apolar form within the summation as follows.
\begin{verbatim}
bS = sub(basis(1,R),S)
fc = sum(length L,i-> 
c_i*((bS*(mingens kernel diff(bS, transpose mingens sub(L_i,S))))_(0,0))^d );
\end{verbatim}
 
Next we solve the linear system on the \verb#c_i# obtained by setting equal to zero all \verb#d#$^{th}$ derivatives of the expression \verb#fc-ff#. 
\begin{verbatim}
Ic= ideal(sub(diff(basis(d,R),ff),S) - diff(sub(basis(d,R),S),fc));
Vc = decompose saturate Ic
\end{verbatim}
 Finally we substitute the found values for the \verb#c_i# into the polynomial \verb#fc# check to see if the decomposition succeeded. 
\begin{verbatim}
FF =sub(substitute(fc,S/Vc_0),R)
FF-ff
\end{verbatim}

Our tests succeeded to find decompositions for the following  \verb#n#, \verb#d#, and \verb#s# for example.
\begin{verbatim}
-- n=2: (d=3, s=1),  (d=4, s<=4),  (d=5, s<=4),  (d=6, s<=8) 
-- n=3: (d=3, s=1),  (d=4, s<=7),  (d=5, s<=7),  (d=6, s<=16)
-- n=4: (d=3, s=1),  (d=4, s<=10), (d=5, s<=10), (d=6, s<=16).
\end{verbatim}

\subsection{The Koszul flattening algorithm implementation}
Our second example is the implementation of the Koszul flattening algorithm (Algorithm~\ref{koszulgeneral}). This is contained in the file ``General Kappa Method.m2''.

As before we tested our algorithm by taking a sum of a fixed number $s$ of powers of random linear forms, expanding the resulting polynomial, and then testing to see if our algorithm gave the correct decomposition.  Here we will describe the aspects of this algorithm that differ from the catalecticant algorithm.

As before, we construct a map that computes the ``most square'' catalecticant matrix associated to a given input polynomial of degree \verb#d-1#.  The degree drops because we will eventually feed this map the first partial derivatives of our input polynomial \verb#ff#.
Then we construct the Koszul complex.
\begin{verbatim}
M = ideal(basis(1,R))
RM = resolution M
\end{verbatim}

Using the \verb#diff# command again, we take a matrix from the Koszul complex and construct a block matrix, replacing each entry in the Koszul matrix with the catalecticant of the partial derivative of our test function with respect to the entry in the Koszul matrix.  This matrix is our Koszul flattening, where \verb#ka# indicates which map in the Koszul complex we are using. In this case 
\verb#ka=n+1-ceiling(n/2)#. The matrix \verb#K# corresponds to the map called $P_f$ in this paper.
\begin{verbatim}
K = diff(transpose RM.dd_ka, catalecticant ff)
\end{verbatim}

The base locus of the kernel of the Koszul Flattening \verb#K# is a set of (generalized) eigenvectors.  Later we will construct this base locus, for now we compute the generators of the kernel of the Koszul Flattening. 
\begin{verbatim}
KM = generators kernel K 
\end{verbatim}

The kernel of the Koszul flattening should be a vector space of polynomials, but at present it is expressed as integer vectors. The Koszul flattening \verb#K# is an \verb#a#$\times$\verb#a# blocked matrix 
(in the case $n$ even, with small variations in the odd case,
when the matrix is no longer square) of \verb#m#$\times$\verb#m# blocks where \verb#a = binomial(n+1,ka)# and \verb#m=binomial(n+d-ac-1,n)#. The kernel of \verb#K# respects this block structure.  So we convert each blocked integer vector in the kernel into a smaller vector where each of the \verb#a# blocks becomes a polynomial written in the basis of monomials previously defined at \verb#xaf#. Macaulay 2 can do these computations simultaneously on matrices and not just individual vectors. 
\begin{verbatim}
for i from 0 to a-1 do { G_i =submatrix(KM,{i*m..(i+1)*m-1},);}  
pG =xaf* G_0; for i from 1 to a-1 do { pG = pG||(xaf*G_i);}  
\end{verbatim}
The outcome \verb#pG# is a matrix, each row of which is a vector of polynomials in the kernel of \verb#K#.

The kernel of \verb#K# is a subspace of $\Hom(S^{\hbox{\verb#ka#}}V,V)$, and 
the zero-set of the $\hbox{\verb#a#}\times \hbox{\verb#a#}$ minors of the  matrix representing a map in 
$\Hom(S^{\hbox{\verb#ka#}}V,V)$ are generalized eigenvectors.  Therefore, for each basis vector 
of the kernel of \verb#K#, we construct an ideal of $\hbox{\verb#a#}\times \hbox{\verb#a#}$  minors.  
Since we are interested in the common generalized eigenvectors to all basis 
vectors of the kernel of \verb#K#, we construct an ideal \verb$J$ which is generated by all 
of the {$\hbox{\verb#a'#}\times \hbox{\verb#a'#}$} minors we constructed as follows:
\verb#a'= binomial(n,ka)+1#.
\begin{verbatim}
J= ideal(0*x_0); 
for i from 0 to r-1 do 
            J= J + minors(a',RM.dd_(ka+1)|transpose(submatrix(pG,,{i}))); 
\end{verbatim}

Next we want to compute the zero-set of the the ideal \verb#J# above.  
In order to save time, we first compute the saturation of the ideal since an ideal and its saturation have the same zero-set (scheme).
\begin{verbatim}
L = decompose saturate J
\end{verbatim}
The list \verb#L# consists of linear forms which are generalized eigenvectors in the kernel of \verb#K#. 
The solutions in \verb#L# are the polar forms to those that we want.  The rest of the implementation is identical to that of the catalecticant algorithm implementation.

This example code succeeds for the following initial parameters: 
\begin{verbatim}
--n=2, (d=3, s<=3), (d=4, s<=3), (d=5, s<=7), (d=6, s<=7), 
--n=3, (d=3, s<=5}), (d=4, s<=5), (d=5, s<=11),(d=6, s<=11),
--n=4, (d=3, s<=6), (d=4, s<=6), (d=5, s<=14).
\end{verbatim}

We were able to test that over prime characteristic, all of these cases are sharp except for the case \verb#n=4, d=5#, as in this case the algorithm slowed considerably as \verb#s# grew. Note for example that when \verb#n=3#, \verb#d=5#, and \verb#s=8# there are rationality problems.

\begin{remark}[Remark on numerical methods for inexact solutions and practical issues]\label{rmk:numerical }
Many of the bounds in the examples we have presented could be improved if we allowed for irrational or complex solutions to our systems of polynomials.  In addition, we could succeed in treating the generic case if we were to use numerical eigenvector methods on the Koszul flattening and to proceed by declaring the kernel of \verb#K# to consist of those eigenvectors of \verb#K# which are associated to small eigenvalues.  We believe that the algorithms we have presented are well suited to such adaptations, however we leave this to future study.
\end{remark}

\bibliographystyle{amsalpha}
\bibliography{eigen_bibdata2}

\def\Dbar{\leavevmode\lower.6ex\hbox to 0pt{\hskip-.23ex \accent"16\hss}D}
\providecommand{\bysame}{\leavevmode\hbox to3em{\hrulefill}\thinspace}
\providecommand{\MR}{\relax\ifhmode\unskip\space\fi MR }
\providecommand{\MRhref}[2]{%
  \href{http://www.ams.org/mathscinet-getitem?mr=#1}{#2}
}
\providecommand{\href}[2]{#2}
\begin{thebibliography}{NQWW07}

\bibitem[AH95]{AH2}
J.~Alexander and A.~Hirschowitz, \emph{Polynomial interpolation in several
  variables}, J. Algebraic Geom. \textbf{4} (1995), no.~2, 201--222.
  \MR{1311347 (96f:14065)}

\bibitem[Bal05]{Ballico2005}
E.~Ballico, \emph{On the weak non-defectivity of {V}eronese embeddings of
  projective spaces}, Cent. Eur. J. Math. \textbf{3} (2005), no.~2, 183--187
  (electronic). \MR{2129920 (2005m:14097)}

\bibitem[BB10]{BucBuc}
W.~{Buczy{\'n}ska} and J.~{Buczy{\'n}ski}, \emph{{Secant varieties to high
  degree Veronese reembeddings, catalecticant matrices and smoothable
  Gorenstein schemes}}, ArXiv:1012.3563 (2010).

\bibitem[BB11]{BallicoBernardi2010}
E.~Ballico and A.~Bernardi, \emph{Decomposition of homogeneous polynomials with
  low rank}, Mathematische Zeitschrift (2011), 1--9, 10.1007/s00209-011-0907-6.

\bibitem[BCMT10]{BCMT}
Jerome Brachat, Pierre Comon, Bernard Mourrain, and Elias Tsigaridas,
  \emph{Symmetric tensor decomposition}, Linear Algebra and its Applications
  \textbf{433} (2010), no.~11-12, 1851 -- 1872.

\bibitem[BGI11]{BernardiGI}
A.~{Bernardi}, A.~{Gimigliano}, and M.~{Id{\`a}}, \emph{Computing symmetric
  rank for symmetric tensors}, Journal of Symbolic Computation \textbf{46}
  (2011), no.~1, 34 -- 53.

\bibitem[BKP11]{BKP}
G.~Ballard, T.~Kolda, and T.~Plantenga, \emph{Efficiently computing tensor
  eigenvalues on a {GPU}}, 2011, CSRI Summer Proceedings.

\bibitem[CC02]{CC02}
L.~Chiantini and C.~Ciliberto, \emph{Weakly defective varieties}, Trans. Amer.
  Math. Soc. \textbf{354} (2002), no.~1, 151--178 (electronic). \MR{1859030
  (2003b:14063)}

\bibitem[CEO11]{CEO}
D.~{Cartwright}, D.~{Erman}, and L.~{Oeding}, \emph{{Secant varieties of ${\bf
  P}^{2} \times {\bf P}^{n}$ embedded by ${\mathcal O}(1,2)$}}, Journal of the
  London Mathematical Society, to appear. (2011), ArXiv:1009.1199.

\bibitem[CM96]{Comon-Mourrain}
P.~Comon and B.~Mourrain, \emph{Decomposition of quantics in sums of powers of
  linear forms}, Signal Processing \textbf{53} (1996), no.~2, 93--107, Special
  issue on High-Order Statistics.

\bibitem[CS11a]{CartwrightSturmfels2011}
D.~Cartwright and B.~Sturmfels, \emph{The number of eigenvalues of a tensor},
  Linear Algebra and its Applications (2011), no.~0, --.

\bibitem[CS11b]{Comas_Seiguer}
G.~Comas and M.~Seiguer, \emph{On the rank of a binary form}, Foundations of
  Computational Mathematics \textbf{11} (2011), 65--78,
  10.1007/s10208-010-9077-x.

\bibitem[dSL08]{deSilva_Lim}
V.~de~Silva and L.-H. Lim, \emph{Tensor rank and the ill-posedness of the best
  low-rank approximation problem}, SIAM J. Matrix Anal. Appl. \textbf{30}
  (2008), 1084--1127.

\bibitem[EC18]{Enriques_book}
F.~Enriques and O.~Chisini, \emph{Lezioni sulla teoria geometrica delle
  equazioni e delle funzioni algebriche}, Zanichelli, Bologna, 1918.

\bibitem[Eis05]{Eisenbud_syzygies}
D.~Eisenbud, \emph{The geometry of syzygies}, Graduate Texts in Mathematics,
  vol. 229, Springer-Verlag, New York, 2005, A second course in commutative
  algebra and algebraic geometry. \MR{2103875 (2005h:13021)}

\bibitem[FH91]{FultonHarris}
W.~Fulton and J.~Harris, \emph{Representation theory: A first course}, Graduate
  Texts in Mathematics, vol. 129, New York: Springer-Verlag, 1991. \MR{1153249
  (93a:20069)}

\bibitem[GS10]{M2}
D.~Grayson and M.~Stillman, \emph{Macaulay2, a software system for research in
  algebraic geometry}, Available at \href{http://www.math.uiuc.edu/Macaulay2/}%
  {http://www.math.uiuc.edu/Macaulay2/}, 2010.

\bibitem[Har77]{Hartshorne_tome}
Robin Hartshorne, \emph{Algebraic geometry}, Springer-Verlag, New York, 1977,
  Graduate Texts in Mathematics, No. 52. \MR{0463157 (57 \#3116)}

\bibitem[HL09]{Hillar09mosttensor}
C.~J. Hillar and L.-H. Lim, \emph{Most tensor problems are np hard},
  arXiv:0911.1393 (2009).

\bibitem[IK99]{IarrobinoKanev}
A.~Iarrobino and V.~Kanev, \emph{Power sums, {G}orenstein algebras, and
  determinantal loci}, Lecture Notes in Mathematics, vol. 1721,
  Springer-Verlag, Berlin, 1999, Appendix C by Iarrobino and Steven L. Kleiman.
  \MR{1735271 (2001d:14056)}

\bibitem[KM10]{KoldaMayo}
T.~{Kolda} and J.~{Mayo}, \emph{{Shifted Power Method for Computing Tensor
  Eigenpairs}}, ArXiv:1007.1267 (2010).

\bibitem[Lim05]{Lim05}
Lek-Heng Lim, \emph{Singular values and eigenvalues of tensors: a variational
  approach}, Computational Advances in Multi-Sensor Adaptive Processing, 2005
  1st IEEE International Workshop on, 2005, pp.~129 --132.

\bibitem[LO11]{LO11}
J.~M. {Landsberg} and G.~{Ottaviani}, \emph{{Equations for secant varieties of
  Veronese and other varieties}}, to appear in Annali di Matematica Pura e
  Applicata, ArXiv:1111.4567 (2011).

\bibitem[Mel06]{Mella06}
M.~Mella, \emph{Singularities of linear systems and the {W}aring problem},
  Trans. Amer. Math. Soc. \textbf{358} (2006), no.~12, 5523--5538 (electronic).
  \MR{2238925 (2007h:14059)}

\bibitem[Mel09]{Mella09}
\bysame, \emph{Base loci of linear systems and the {W}aring problem}, Proc.
  Amer. Math. Soc. \textbf{137} (2009), no.~1, 91--98. \MR{2439429
  (2009g:14072)}

\bibitem[NQWW07]{MR2296920}
G.~Ni, L.~Qi, F~Wang, and Y.~Wang, \emph{The degree of the {E}-characteristic
  polynomial of an even order tensor}, J. Math. Anal. Appl. \textbf{329}
  (2007), no.~2, 1218--1229. \MR{2296920 (2008f:15098)}

\bibitem[OS10]{OttSturm}
G.~{Ottaviani} and B.~{Sturmfels}, \emph{{Matrices with Eigenvectors in a Given
  Subspace}}, to appear in Proc. of the American Math. Soc., ArXiv: 1012.1016
  (2010).

\bibitem[OSS80]{OSS}
C.~Okonek, M.~Schneider, and H.~Spindler, \emph{Vector bundles on complex
  projective spaces}, Progress in Mathematics, vol.~3, Birkh\"auser Boston,
  Mass., 1980. \MR{561910 (81b:14001)}

\bibitem[Ott09]{Ott2}
G.~Ottaviani, \emph{An invariant regarding {W}aring's problem for cubic
  polynomials}, Nagoya Math. J. \textbf{193} (2009), 95--110. \MR{MR2502909
  (2010c:14057)}

\bibitem[Qi05]{Qi05}
L.~Qi, \emph{Eigenvalues of a real supersymmetric tensor}, J. Symbolic Comput.
  \textbf{40} (2005), no.~6, 1302--1324. \MR{2178089 (2006j:15031)}

\bibitem[RS00]{RanestadSchreyer}
K.~Ranestad and F.-O. Schreyer, \emph{Varieties of sums of powers}, J. Reine
  Angew. Math. \textbf{525} (2000), 147--181. \MR{1780430 (2001m:14009)}

\end{thebibliography}

\end{document}